\title{Asymptotics and inequalities for partitions into squares}
\author{Alexandru Ciolan}
\address{Mathematical Institute, University of Cologne, Weyertal 86--90, 50931
	Cologne, Germany}
\email{aciolan@math.uni-koeln.de}
\newtheorem{Thm}{Theorem}
\newtheorem{Con}{Conjecture}
\newtheorem{Lem}{Lemma}
\newcommand{\bb}{\mathbb}
\theoremstyle{remark}
\newtheorem{Rem}{Remark}
\DeclareMathOperator{\Arg}{Arg}
\DeclareMathOperator{\Res}{Res}
\DeclareMathOperator{\Log}{Log}
\let\@@pmod\pmod
\DeclareRobustCommand{\pmod}{\@ifstar\@pmods\@@pmod}
\def\@pmods#1{\mkern4mu({\operator@font mod}\mkern 6mu#1)}
\begin{document}
%\date{\today}
\begin{abstract}
In this paper we prove that the number of partitions into squares with an even number of parts is asymptotically equal to that of partitions into squares with an odd number of parts. We further show that, for $ n $ large enough, the two quantities are different and which of the two is bigger depends on the parity of $ n. $ This solves a recent conjecture formulated by Bringmann and Mahlburg (2012). 
\end{abstract}
\subjclass[2010]{11P82, 11P83}
\keywords{Asymptotics, circle method, partitions, squares}
\maketitle

%\Alex: Here I continue the work started by Kathrin Bringmann and Karl Mahlburg in the pdf notes from \cite{BM}. Along the way, I correct several typos, double-check the proofs from \cite{BM} and, whenever needed, correct/elaborate on them. In {\color{blue}blue}, between square brackets, are my comments and contributions, whilst in {\color{red}red} are several things/typos I corrected from \cite{BM}. For convenience, in most places I kept also the original parts from \cite{BM}, with the original formulations, above or below which I added my comments. My most significant contributions are mainly towards the end of the paper. Section \ref{Euler-Maclaurin} is new and is an attempt at trying to see whether applying the Euler-Maclaurin summation formula gives us any useful results. Section \ref{Proving Conjecture 2} is also new and in there we try to prove the inequalities and asymptotics for the mod 3 case.
\section{Introduction}\label{Intro}
A \textit{partition} of a positive integer $ n $ is a non-increasing sequence of positive integers (called its \textit{parts}), usually written as a sum, which add up to $ n. $ The number of partitions of $ n $ is denoted by $ p(n). $ For example, $ p(5)=7 $ as the partitions of $ 5 $ are $ 5, $ $ 4+1, $ $ 3+2, $ $ 3+1+1, $ $ 2+2+1, $ $ 2+1+1+1 $ and $ 1+1+1+1+1. $ By convention, $p(0)=1.$ This is the case of the so-called \textit{unrestricted} partitions, but one can consider partitions with various other properties, such as partitions into odd parts, partitions into distinct parts, etc. %In this paper we study partitions into squares.
\par Studying congruence properties of partition functions fascinated many people and we limit ourselves to mentioning the famous congruences of Ramanujan \cite{Ram}, who proved that if $ n\ge0, $ then 
\begin{align*}
	p(5n+4)&\equiv  {0 \pmod*5,}\\
	p(7n+5)&\equiv  {0 \pmod*7,}\\
	p(11n+6)&\equiv  {0 \pmod*{11}.}
\end{align*} 
%\begin{eqnarray*}
% p(5n+4)\equiv  0 &{\pmod*5,}\\
% p(7n+5)\equiv  0 &{\pmod*7,}\\
% p(11n+6)\equiv  0 &{\pmod*{11}.}
%\end{eqnarray*}
\par In this paper we study partitions based on their number of parts being in certain congruence classes. For $ r\in\bb N $ let $ p_r(a,m,n) $ be the number of partitions of $ n $ into $ r $-th powers  with a number of parts that is congruent to $ a$ modulo $ m.$ Glaisher \cite{Glaisher} proved (with different notation) that \[p_1(0,2,n)-p_1(1,2,n)=(-1)^n p_{\rm odd}(n),\]
where $ p_{\rm odd}(n) $ denotes the number of partitions of $ n $ into odd parts without repeated parts. \par It is as such of interest to ask what happens for partitions into $ r $-th powers with $ r\ge2, $ and a natural point to start is by investigating partitions into squares. Based on computer experiments, Bringmann and Mahlburg \cite{BM} observed an interesting pattern and conjectured the following.
\begin{Con}[Bringmann--Mahlburg, 2012]\label{ConjBM}~ 		
\begin{enumerate}[{\rm (i)}]	
\item As $ n\to\infty, $ we have \[p_2(0,2,n)\sim p_2(1,2,n).\] 
\item We have
\[\begin{cases}
p_2(0,2,n)>p_2(1,2,n) & \text{if~$n$~is even,}\\
p_2(0,2,n)<p_2(1,2,n) & \text{if~n~is odd.}
\end{cases}\]\end{enumerate}\end{Con}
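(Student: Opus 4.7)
My strategy is to study the generating function
\begin{equation*}
F(q) \;=\; \prod_{m\ge 1}\frac{1}{1+q^{m^2}} \;=\; \sum_{n\ge 0} a_n\, q^n, \qquad a_n := p_2(0,2,n)-p_2(1,2,n),
\end{equation*}
and pin down the asymptotics of $a_n$ via Wright's variant of the circle method. Both halves of the conjecture follow from a single statement: if I can prove an asymptotic of the shape $(-1)^n a_n \sim A\, n^{-\alpha}\exp(B n^{1/3})$ with $A,B>0$, then (ii) is immediate (signs alternate) and (i) is automatic provided $B$ is strictly smaller than the exponent $B'$ appearing in Wright's asymptotic $p_2(n)\sim A' n^{-\alpha'}\exp(B' n^{1/3})$.

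The first task is to locate the dominant singularity of $F$ on $|q|=1$ and get a clean expansion there. A formal Mellin computation already identifies $q=-1$, not $q=1$, as the right place: writing $q=-e^{-t}$ and splitting the product according to the parity of $m$, one has
\begin{equation*}
-\log F(-e^{-t}) \;=\; \sum_{m\;\mathrm{odd}}\log\bigl(1-e^{-tm^2}\bigr) \;+\; \sum_{m\;\mathrm{even}}\log\bigl(1+e^{-tm^2}\bigr).
\end{equation*}
Expanding each logarithm in $e^{-tkm^2}$ and using the Jacobi/Poisson expansion $\sum_{m\ge1}e^{-tkm^2}=\tfrac{1}{2}\sqrt{\pi/(kt)}-\tfrac12+O(e^{-c/(kt)})$, the "odd" sum contributes $-\tfrac{\sqrt\pi}{4}\zeta(3/2)\,t^{-1/2}$ and the "even" sum contributes $+\tfrac{\sqrt\pi}{4}\eta(3/2)\,t^{-1/2}$ to leading order, giving
\begin{equation*}
\log F(-e^{-t}) \;\sim\; \frac{\sqrt{\pi}\,\zeta(3/2)}{4\sqrt{2}}\, t^{-1/2} \qquad (t\to 0^+),
\end{equation*}
because $\eta(3/2)-\zeta(3/2)=-2^{-1/2}\zeta(3/2)$. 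By contrast, at $q=1$ one finds $\log F(e^{-t})=-\tfrac{\sqrt\pi}{2}\eta(3/2)\,t^{-1/2}+O(1)$, i.e.\ $F$ is small there. I will then push this to a full asymptotic expansion (a Mellin--Barnes contour shift through the poles of $\Gamma(s)\,\eta(2s+1)$ or equivalently $\Gamma(s)(1-2^{-s})\zeta(2s)$) and record the constant term, which feeds into the prefactor $A$.

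Next I apply Wright's saddle-point method to
\begin{equation*}
(-1)^n a_n \;=\; \frac{1}{2\pi i}\oint F(-e^{-t})\, e^{(n+1)t}\, dt
\end{equation*}
(contour a small circle around $t=0$). The saddle is at $t_n=(C/(2n))^{2/3}$ with $C=\tfrac{\sqrt{\pi}\zeta(3/2)}{4\sqrt{2}}$, and a standard stationary-phase calculation yields $(-1)^n a_n\sim A\, n^{-3/4}\exp(B n^{1/3})$ with $B=3\cdot 2^{-2/3}C^{2/3}$. A direct arithmetic comparison with Wright's constant for $p_2(n)$, which comes from $C_P=\tfrac{\sqrt\pi}{2}\zeta(3/2)$, gives $B_P/B=(C_P/C)^{2/3}=(2\sqrt{2})^{2/3}=2$, so the gap between the two exponentials is itself exponential in $n^{1/3}$; this makes (i) follow painlessly from (ii).

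The main obstacle will be the \emph{minor-arc} estimate: to justify the saddle-point approximation one must bound $|F(q)|$ on the portion of the contour with $|\arg(-q)|\ge t_n^{1-\epsilon}$ by something like $F(-e^{-t_n})\exp(-\eta\,t_n^{-1/2})$. Because $F$ is an infinite product rather than a modular form, no clean transformation identifies its behaviour at an arbitrary root of unity, and I will have to do this by hand: use the Taylor expansion of $\log(1+q^{m^2})$, group the $m$'s according to residues modulo small moduli, and exploit cancellation between the terms in the exponent to produce the required saving. In particular one must check that $q=1$ and primitive $2^k$-th roots of unity (where individual factors degenerate) do not contribute comparable mass; establishing this cleanly, with uniform constants, is where the bulk of the technical work will lie.
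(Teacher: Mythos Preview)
Your plan is essentially the paper's: substitute $q\mapsto -q$ so that the dominant singularity sits at $q=1$, extract the main term via a Mellin--Barnes shift (the paper's Lemma~1 gives exactly your expansion, in the sharper form $\log F(-e^{-t})=\frac{\sqrt\pi\,\zeta(3/2)}{4\sqrt2}\,t^{-1/2}-\frac{\log2}{2}+O(t^{c_0})$), run the saddle point, and compare exponents with Wright's $p_2(n)$; your ratio $B_P/B=2$ is exactly what the paper finds. Two remarks. First, a numerical slip: the polynomial prefactor is $n^{-5/6}$, not $n^{-3/4}$ (the saddle width is $|f''(t_0)|^{-1/2}\asymp t_0^{5/4}\asymp n^{-5/6}$, and since $D(0)=0$ the constant term contributes no extra power of $n$). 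Second, and more to the point, for the minor arcs the paper does \emph{not} attack $\log(1+q^{m^2})$ directly as you propose, but first rewrites
\[
F(-q)\;=\;\prod_{n\ge1}\frac{(1-q^{4n^2})^2}{(1-q^{n^2})(1-q^{8n^2})}\;=\;\frac{H_2(q)\,H_2(q^8)}{H_2(q^4)^2},
\]
and then applies Wright's modular-type transformation to each $H_2$-factor near every Farey point $a/b$. This reduces the whole minor-arc problem to a single \emph{explicit numerical inequality} for the combined singular-series constant $\lambda_{a,b}$, namely $\max(|\mathrm{Re}\,\lambda_{a,b}|,|\mathrm{Im}\,\lambda_{a,b}|)<\lambda_{0,1}/1.14$ for all $b\ge2$, which the paper verifies by a lengthy Gauss-sum case analysis (four cases according to $v_2(b)$, with computer checks for small $b$). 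Your ``group by residues and exploit cancellation'' is morally the same programme, but without the $H_2$-factorization you will struggle to get uniform constants at the $2$-power roots of unity you rightly flag as dangerous; I would adopt that factorization. Finally, note that an asymptotic for $(-1)^n a_n$ only yields part~(ii) for $n$ sufficiently large---which is all the paper proves, since in fact $a_n=0$ for several small $n$.
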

We build on the initial work done by Bringmann and Mahlburg \cite{BM} towards solving Conjecture \ref{ConjBM}, the goal of this paper being to prove that the inequalities stated in part (ii) hold true asymptotically. In turn, this will show that part (i) of Conjecture \ref{ConjBM} holds true as well. 
\pagebreak\par More precisely, we prove the following.
\begin{Thm}\label{Conj1} ~		
	\begin{enumerate} 	
		\item[{\textup{(i)}}] As $ n\to\infty, $ we have \[p_2(0,2,n)\sim p_2(1,2,n).\] 
		\item[{\textup{(ii)}}] Furthermore, for $ n $ sufficiently large, we have
		\[\begin{cases}
		p_2(0,2,n)>p_2(1,2,n) & \text{if~$ n $~is even,}\\
		p_2(0,2,n)<p_2(1,2,n) & \text{if~$ n $ is odd}.
		\end{cases}\]\end{enumerate}
\end{Thm}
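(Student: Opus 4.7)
The plan is to reduce both parts of Theorem~\ref{Conj1} to a single sign-tracking asymptotic. Consider
$$G(q) := \sum_{n\ge 0}\bigl(p_2(0,2,n)-p_2(1,2,n)\bigr)q^n = \prod_{k\ge 1}\frac{1}{1+q^{k^2}},$$
obtained by setting $z=-1$ in the two-variable generating function $\prod_k(1-zq^{k^2})^{-1}$. Since $p_2(0,2,n)+p_2(1,2,n)$ equals the number of partitions of $n$ into squares, both (i) and (ii) will follow from showing that $c_n := (-1)^n[q^n]G(q) = [q^n]G(-q)$ is eventually positive and satisfies $c_n=o(p_2(n))$. Using $(-1)^{k^2}=(-1)^k$, we rewrite
$$G(-q)=\prod_{k\text{ odd}}\frac{1}{1-q^{k^2}}\;\prod_{k\text{ even}}\frac{1}{1+q^{k^2}},$$
whose dominant singularity (from the first product) lies at $q=1$.

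The first step is a Mellin-transform analysis of the behavior of $G(-e^{-t})$ as $t\to 0^+$. Expanding the logarithm and applying termwise Mellin transforms produces
$$\int_0^\infty t^{s-1}\log G(-e^{-t})\,dt = \Gamma(s)\,\zeta(s+1)\,\zeta(2s)\bigl[(1-2^{-2s})-2^{-2s}(1-2^{-s})\bigr].$$
The rightmost pole at $s=1/2$, coming from $\zeta(2s)$, yields the principal asymptotic $\log G(-e^{-t}) = \frac{\sqrt\pi\,\zeta(3/2)}{2^{5/2}}\,t^{-1/2}+O(\log(1/t))$. Crucially, this leading constant is exactly $1/(2\sqrt 2)$ times the analogous constant for $\prod_k(1-e^{-tk^2})^{-1}$ (the generating function of $p_2(n)$), so after the standard saddle point at $t_0\asymp n^{-2/3}$ the resulting exponential growth rate $\exp(\beta n^{1/3})$ has $\beta$ strictly smaller than the Wright constant for $p_2(n)$.

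The second step is a circle-method argument in the style of Wright's treatment of partitions into $r$-th powers. With $q=\exp(-t_0+2\pi i\theta)$, the major arc around $\theta=0$ is evaluated by the saddle-point method, using the Mellin expansion above together with its subleading terms to extract a main term of the form $c_n \sim C\,n^{-\alpha}\exp(\beta n^{1/3})$ with $C>0$. On the minor arc one must show that $|G(-q)|$ is exponentially dominated by its value at $q=e^{-t_0}$; because $G$ is not modular, this is done by direct estimates on the Euler factors $|1\pm q^{k^2}|$, exploiting the distribution of squares modulo small moduli to force cancellation. Once the asymptotic for $c_n$ is in hand, (ii) follows immediately (the leading constant is positive, so $c_n>0$ for large $n$) and (i) follows from the parity decomposition $p_2(0,2,n)\pm p_2(1,2,n)\in\{p_2(n),c_n\}$ together with $c_n=o(p_2(n))$. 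The main obstacle is the minor-arc analysis: with no modular transformation formula available, one has to extract enough cancellation from the arithmetic of the exponents $k^2$ to beat the major-arc contribution, while simultaneously carrying enough terms in the major-arc expansion to pin down the sign of $C$ rather than merely its magnitude.
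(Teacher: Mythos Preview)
Your high-level reduction and the Mellin/saddle-point analysis of the main term near $q=1$ are correct and match the paper exactly: the paper also works with $G(-q)=\prod_n\frac{(1-q^{4n^2})^2}{(1-q^{n^2})(1-q^{8n^2})}$, obtains the same Dirichlet series $D(s)=(1+2^{-3s}-2^{1-2s})\zeta(2s)$ (your bracketed factor simplifies to this), extracts the residue at $s=\tfrac12$, and then runs a saddle-point argument to get $c_n\sim C n^{-5/6}\exp\bigl(3(\tfrac{1}{4\sqrt2}\Gamma(\tfrac32)\zeta(\tfrac32))^{2/3}n^{1/3}\bigr)$ with an explicit $C>0$. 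Part (i) is then deduced exactly as you say, by comparison with Wright's asymptotic for $p_2(n)$.

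The substantive divergence is in the arc estimates away from $q=1$, which you correctly flag as the main obstacle but approach differently. You propose direct bounds on the Euler factors $|1\pm q^{k^2}|$, on the grounds that no transformation law is available. In fact one \emph{is} available: Wright proved a modular-type transformation for $H_2(q)=\prod_n(1-q^{n^2})^{-1}$, and the paper exploits it by writing $G$ as the ratio $H_2(q)H_2(q^8)/H_2(q^4)^2$. Near a Farey point $a/b$ this yields $G(q)=C\exp(\lambda_{a,b}/\sqrt{\tau'})\cdot(\text{tame $P$-factors})$, where $\lambda_{a,b}$ is an explicit linear combination of the quadratic Gauss sums $S_{ma,b}=\sum_{n=1}^b e^{2\pi i m a n^2/b}$. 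The technical heart of the paper is then Lemma~\ref{boundmaxReIm}: a case split on the $2$-adic valuation of $b$ showing $\max\{|\mathrm{Re}\,\lambda_{a,b}|,|\mathrm{Im}\,\lambda_{a,b}|\}<\lambda_{0,1}/1.14$ for all $b\ge 2$, via the closed-form Gauss-sum evaluation, elementary divisor-sum bounds, and a finite computer verification for the $b$ below an explicit threshold in each case. Combined with an easy calculus maximisation (Lemma~\ref{biglemma}), this gives the uniform exponential saving on all non-principal major arcs; the genuine minor arcs are handled by a direct citation of Wright. Your ``distribution of squares modulo small moduli'' is morally the same arithmetic input, but without Wright's transformation it is unclear how you would convert it into a single clean inequality that beats the main term uniformly in $a/b$; the paper's route reduces this to a concrete, checkable numerical statement about Gauss sums.
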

In other words, we prove that the number of partitions into squares with an even number of parts is asymptotically equal to that of partitions into squares with an odd number of parts. However, for $ n $ large enough, the two quantities are always different, which of the two is bigger depending on the parity of $ n. $ Given that asymptotics for partitions into $ r $-th powers (in particular, for partitions into squares) are known due to Wright \cite{WrightIII}, we can make the asymptotic value in part (i) of Theorem \ref{Conj1} precise. We will come back to this after we give the proof of Theorem \ref{Conj1}.
%\par The author is not aware of anything related that might have been covered by old works, e.g., of Meinardus, Szekeres or Wright.
\par As for the structure of this paper, in Sections \ref{Inequalities for partitions with a fixed number of parts} and \ref{Proof of Conjecture 1} we introduce the notation needed in the sequel and do some preliminary work required for the proof of Theorem \ref{Conj1}, which we give in detail in Section \ref{Proof of Theorem 2}.
\section{Preliminaries}\label{Inequalities for partitions with a fixed number of parts}
%\section{Partitions with a number of parts in certain congruence classes}\label{Inequalities for partitions with a fixed number of parts}
\subsection{Notation} Before going into details, we recall some notation and well-known facts that will be used throughout. By $ \Gamma(s)$ and $ \zeta(s) $ we denote the usual Gamma and Riemann zeta functions, while by $$ \zeta(s,q)=\sum_{n=0}^{\infty}\frac1{(q+n)^s}\quad(\text{for~}{\rm Re}(s)>1\text{~and~}{\rm Re}(q)>0) $$ we denote the Hurwitz zeta function. For reasons of space, we will sometimes use $ \exp (z) $ for $ e^z. $ Whenever we take logarithms of complex numbers, we use the principal branch and denote it by $ \Log $. By $ \zeta_n=e^{\frac{2\pi i}{n}} $ we denote the standard primitive $ n $-th root of unity.
\par If by $ p_r(n) $ we denote the number of partitions of $ n $ into $ r $-th powers, then it is well-known (see, for example, Andrews \cite[Ch. 1]{And}) that 
\[\prod_{n=1}^{\infty}\left(1-q^{ n^r} \right)^{-1}=1+\sum_{n=1}^{\infty}p_r(n)q^n,\] where, as usual, $ q=e^{2\pi i\tau} $ and $ \tau\in\bb H $ (the upper half-plane).
\subsection{A key identity} Let
\[H_r(w;q)=\sum_{m,n\ge0}p_r(m,n)w^mq^n,\]
where $ p_r(m,n) $ denotes the number of partitions of $ n $ into $ r $-th powers with exactly $ m $ parts, and let
\[H_{r,a,m}(q)=\sum_{n\ge0}p_r(a,m,n)q^n,\]
where $ p_r(a,m,n) $ stands, as defined in the Introduction, for the number of partitions of $ n $ into $ r $-th powers with a number of parts that is congruent to $ a$ modulo $ m. $  

By using the orthogonality of roots of unity, we obtain 
\begin{equation}\label{orthogonal}
	H_{r,a,m}(q)=\frac 1m H_r(q)+\frac 1m \sum_{j=1}^{m-1}\zeta_m^{-aj}H_r(\zeta_m^j;q),
\end{equation}
where we denote  \[H_r(q)=\prod_{n=1}^{\infty}\left(1-q^{ n^r} \right)^{-1}.\]

\subsection{A reformulation of our result}\label{reformulation} 
For the rest of the paper we only deal with the case $ r=2, $ which corresponds to partitions into squares. 
To prove part (ii) of Theorem \ref{Conj1} it is enough to show that the series
\[H_{2,0,2}(-q)-H_{2,1,2}(-q)=\sum_{n=0}^{\infty}a_2(n)q^n\]
has positive coefficients for sufficiently large $ n, $ since 
\begin{equation*}\label{a2fromp2}
	a_2(n)=\begin{cases}
		p_2(0,2,n)-p_2(1,2,n) & \text{if~$ n $~is even,}\\
		p_2(1,2,n)-p_2(0,2,n) & \text{if~$n$~is odd.}
	\end{cases}
\end{equation*}
Using, in turn, \eqref{orthogonal} and eq. (2.1.1) from Andrews \cite[p. 16]{And}, we obtain  
\[H_{2,0,2}(q)-H_{2,1,2}(q)=H_{2}(-1;q)=\prod_{n=1}^{\infty}\frac{1}{1+q^{n^2}}.\]
Changing $ q\mapsto -q $ gives 
\begin{align*}
	H_{2}(-1;-q)&=\prod_{n=1}^{\infty}\frac{1}{1+(-q)^{n^2}}=\prod_{n=1}^{\infty}\frac{1}{\left( 1+q^{4n^2}\right) \left( 1-q^{(2n+1)^2}\right) }\\&=\prod_{n=1}^{\infty}\frac{\big( 1-q^{4n^2}\big) ^2}{\left( 1-q^{8n^2}\right) \left( 1-q^{n^2}\right) }.\end{align*}
Therefore, by setting
\[G(q)=H_{2,0,2}(-q)-H_{2,1,2}(-q),\]
we obtain 
\begin{equation*}
	%\label{GfromH}
	G(q)=\prod_{n=1}^{\infty}\frac{\big(1-q^{4n^2}\big)^2}{\big(1-q^{8n^2}\big)\big(1-q^{n^2}\big)}=\sum_{n=0}^{\infty}a_2(n)q^n
\end{equation*}
and we want to prove that the coefficients $ a_2(n) $ are positive as $ n\to\infty. $ We will come back to this in the next section.

\subsection{Meinardus' asymptotics} Our approach is to some extent similar to that taken by Meinardus \cite{Mein} in proving his famous theorem on asymptotics of certain infinite product generating functions and described by Andrews in more detail in \cite[Ch. 6]{And}. Our case is however slightly different and, whilst we can follow some of the steps, we cannot apply his result directly and we need to make certain modifications. One of them pertains to an application of the \textit{circle method.}
\par Under certain conditions on which we do not insist for the moment, as we shall formulate similar assumptions in the course of our proof, Meinardus gives an asymptotic formula for the coefficients $ r(n) $ of the infinite product
\begin{equation}
	\label{rnmeinardus}
	f(\tau)=\prod_{n=1}^{\infty}(1-q^n)^{-a_n}=1+\sum_{r=1}^{\infty}r(n)q^n,
\end{equation} 
where $ a_n\ge 0 $ and $ q=e^{-\tau} $ with $ {\rm Re}(\tau)>0. $ 
%We consider the Dirichlet series \[D(s)=\sum_{n=1}^{\infty}\frac{a_n}{n^s},\] which we assume converges in the region $ \sigma>\alpha>0~(s=\sigma+it) $ and possesses an analytic continuation for $ \sigma\ge -C_0~(0<C_0<1). $ We further assume that, in this region, $ D(s) $ is analytic except for a simple pole at $ s=\alpha $ with residue $ A $ and that \[D(s)=O\left( |t|^{C_1}\right) \] uniformly in $ \sigma\ge-C_0 $ as $ |t|\to\infty, $ where $ C_1 $ is a fixed positive real number.
%\par Finally, if \[g(\tau)=\sum_{n=1}^{\infty}a_nq^n,\] we assume that, for $ |\Arg(\tau)|?\frac{\pi}{4},~ |x|\le\frac12,$ 
%\begin{equation}
%\label{ReMeinardus}
%{\rm Re}(g(\tau))-g(y)\le-C_2y^{-\varepsilon}
%\end{equation}
%for sufficiently small $ y, $ an arbitrary fixed $ \varepsilon>0 $ and a suitably chosen real number $ C_2>0 $ depending on $ \varepsilon. $ Under these assumptions the following holds.
\begin{Thm}[{Andrews \cite[Ch. 6]{And}, cf. Meinardus \cite{Mein}}]\setcounter{Thm}{0} 
	\label{Meinardus}
	As $ n\to\infty, $ we have
	\[r(n)=Cn^{\kappa}\exp\left( n^{\frac{\alpha}{\alpha+1}}\left(1+\frac{1}{\alpha}\right) ( A\Gamma(\alpha+1)\zeta(\alpha+1))^{\frac{1}{\alpha+1}}    \right)(1+O(n^{-\kappa_1}) ),  \]
	where 
	\begin{align*}
		C & =  e^{D'(0)}\left(  2\pi(1+\alpha)\right) ^{-\frac12}\left(A\Gamma(\alpha+1)\zeta(\alpha+1) \right)^{\frac{1-2D(0)}{2+2\alpha}},\\
		\kappa & =  \frac{D(0)-1-\frac12\alpha}{1+\alpha},\\
		\kappa_1 & =  \frac{\alpha}{\alpha+1}\min \left\lbrace \frac{C_0}{\alpha}-\frac{\delta}{4},\frac12-\delta \right\rbrace,  
	\end{align*}
	with $ \delta>0 $ arbitrary.
\end{Thm}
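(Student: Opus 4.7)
The plan is to derive the asymptotic by a saddle-point analysis of Cauchy's integral
\[
r(n)=\frac{1}{2\pi}\int_{-1/2}^{1/2} f(\tau_0+2\pi i y)\,e^{n(\tau_0+2\pi i y)}\,dy,
\]
where the radius $\tau_0>0$ is chosen as the unique positive saddle point, i.e.\ so that $\tau_0^{\alpha+1}=A\Gamma(\alpha+1)\zeta(\alpha+1)/n$, which makes $\tau_0\asymp n^{-1/(\alpha+1)}$. Here $A$ denotes the residue of $D(s):=\sum_n a_n n^{-s}$ at $s=\alpha$, the abscissa of convergence.

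The first step is to pin down the behaviour of $g(\tau):=\log f(\tau)$ as $\tau\to 0^{+}$ via Mellin inversion. Expanding the logarithms geometrically and using the Mellin representation $e^{-x}=\frac{1}{2\pi i}\int_{(c)}\Gamma(s)x^{-s}\,ds$, one obtains
\[
g(\tau)=\frac{1}{2\pi i}\int_{(C)}\Gamma(s)\,\zeta(s+1)\,D(s)\,\tau^{-s}\,ds\qquad(C>\alpha).
\]
Shifting the contour leftward to $\mathrm{Re}(s)=-C_0$, the only singularities crossed are the simple pole of $D(s)$ at $s=\alpha$ and the double pole of $\Gamma(s)\zeta(s+1)$ at $s=0$. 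A direct residue computation then yields the expansion
\[
g(\tau)=A\Gamma(\alpha)\zeta(\alpha+1)\tau^{-\alpha}-D(0)\log\tau+D'(0)+O(\tau^{C_0}),
\]
the remaining tail being absorbed into the error by the polynomial growth of $D(s)$ in vertical strips (Meinardus' second hypothesis).

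Next, I would split the $y$-range into a \emph{major arc} $|y|\le y_0$ and a \emph{minor arc} $y_0<|y|\le 1/2$, with the cutoff $y_0=\tau_0^{1+\delta/2}$ for a small $\delta>0$. On the major arc, substituting the expansion of $g$ and Taylor-expanding $(\tau_0+2\pi i y)^{-\alpha}$ up to second order in $y$, the linear term vanishes by the saddle condition and the quadratic term produces the Gaussian
\[
\int_{-y_0}^{y_0}\exp\!\Big(-\tfrac{1}{2}(\alpha+1)n\tau_0^{-1}(2\pi y)^2\Big)dy=\sqrt{\tfrac{\tau_0}{2\pi(\alpha+1)n}}\,(1+o(1)),
\]
valid because $n\tau_0 y_0^2\to\infty$. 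Multiplying by $\tau_0^{-D(0)}e^{D'(0)}\exp(g(\tau_0)+n\tau_0)$ and using the identity $g(\tau_0)+n\tau_0=(1+\tfrac{1}{\alpha})(A\Gamma(\alpha+1)\zeta(\alpha+1))^{1/(\alpha+1)}n^{\alpha/(\alpha+1)}$, the main term collapses to $Cn^{\kappa}\exp(\cdots)$ with the constants $C$ and $\kappa$ as stated.

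The main obstacle is controlling the \emph{minor arc}, and this is precisely the role of Meinardus' third hypothesis: it guarantees a bound of the form $\mathrm{Re}(g(\tau_0+2\pi i y))-g(\tau_0)\le -c\,\tau_0^{-\eta}$ for some $c,\eta>0$, uniformly for $y_0\le|y|\le 1/2$. Combined with the trivial bound $|f(\tau)|\le f(\mathrm{Re}\,\tau)$, this renders the minor-arc contribution exponentially smaller than the main term. Balancing the Mellin remainder $O(\tau_0^{C_0})$ against the cubic Taylor error $O(\tau_0^{1/2-3\delta/2})$ on the major arc and the minor-arc bound then produces the relative-error exponent
\[
\kappa_1=\tfrac{\alpha}{\alpha+1}\min\!\Big\{\tfrac{C_0}{\alpha}-\tfrac{\delta}{4},\,\tfrac{1}{2}-\delta\Big\}
\]
displayed in the theorem, completing the argument.
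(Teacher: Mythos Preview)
The paper does not prove this statement at all: Theorem~\ref{Meinardus} is quoted from Andrews \cite[Ch.~6]{And} (cf.\ Meinardus \cite{Mein}) as a known result, with only the hypotheses on $D(s)$ spelled out afterward, and the paper then immediately moves on with ``We now turn attention to our problem.'' So there is no in-paper proof to compare your proposal against.

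That said, your sketch is an accurate outline of the standard Meinardus/Andrews argument: the Mellin representation of $\log f$, the contour shift picking up the poles at $s=\alpha$ and $s=0$, the saddle-point choice $\tau_0^{\alpha+1}=A\Gamma(\alpha+1)\zeta(\alpha+1)/n$, the major/minor arc split at $y_0=\tau_0^{1+\alpha/2-\alpha\delta/4}$ (your exponent $1+\delta/2$ should really carry the $\alpha$ to match the paper's later choice of $\beta$ in \eqref{defbeta}, though for the general theorem the precise parametrisation is a matter of convention), and the Gaussian evaluation on the major arc. Indeed, the paper's own proof of its main Theorem~\ref{Conj1} follows exactly this template, adapted to the specific product $G(q)$, so your write-up is consistent with the method the paper actually employs downstream.
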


Here the Dirichlet series $$ D(s)=\sum_{n=1}^{\infty}\frac{a_n}{n^s}\quad(s=\sigma
+it) $$ is assumed to converge for $ \sigma>\alpha>0 $ and to possess an analytic continuation in the region $ \sigma>-c_0~(0<c_0<1). $ In this region $ D(s) $ is further assumed to be analytic except for a simple pole at $ s=\alpha $ with residue $ A. $
\subsection{Circle method} We now turn attention to our problem. Let $ \tau=y-2\pi ix $ and $ q=e^{-\tau}, $ with $ y>0 $ (so that $ {\rm Re}(\tau)>0 $ and $ |q|<1 $). Recall that, as defined in Section \ref{reformulation}, \begin{equation}\label{GfromH}
	G(q)=\sum_{n=0}^{\infty}a_2(n)q^n=\prod_{n=1}^{\infty}\frac{\big(1-q^{4n^2}\big)^2}{\big(1-q^{n^2}\big)\big(1-q^{8n^2}\big)}.\end{equation}
\par As one can easily see, unlike the product in \eqref{rnmeinardus}, where all factors appear to non-positive powers, the factors $ \big( 1-q^{4n^2}\big)  $ have positive exponents in the product from the right-hand side of \eqref{GfromH}. Therefore  we cannot directly apply Theorem \ref{Meinardus} to obtain asymptotics for the coefficients $ a_2(n). $  
%we cannot directly apply   this infinite product has positive exponents for the factors of the form $ ( 1-q^{4n^2})  $ and therefore we cannot apply Theorem \ref{Meinardus} and obtain asymptotics for the coefficients $ a_2(n). $ 
We will, nevertheless, follow certain steps from the proof of Meinardus \cite{Mein}.  \par Let $ s=\sigma+it $ and 
\[D(s)=\sum_{n=1}^{\infty}\frac{1}{n^{2s}}+\sum_{n=1}^{\infty}\frac{1}{(8n^2)^s}-2\sum_{n=1}^{\infty}\frac{1}{(4n^2)^s}=(1+8^{-s}-2^{1-2s})\zeta(2s),\]
which is convergent for $ \sigma>\frac12=\alpha, $ has a meromorphic continuation to $ \mathbb C $ (thus we may choose $ 0<c_0<1 $ arbitrarily) and a simple pole at $ s=\frac12 $ with residue $ A=\frac{1}{4\sqrt2} .$ 
%From classical properties of the $ \zeta $-function (see, for example, Titchmarsh \cite[Ch. 5]{Titch}) we know that, for some $ c_1>0, $
%\[D(s)=O(|t|^{c_1})\quad\text{as~}|t|\to\infty.\]
We have 
\begin{align*}
	D(0) & =   0,\\
	D'(0) & =   \zeta(0)(-3\log2+4\log2)=-\frac{\log2}{2}.
\end{align*}	
\par By Cauchy's Theorem we have, for $ n>0, $	
\[a_2(n)=\frac1{2\pi i}\int_{\mathcal C}\frac{G(q)}{q^{n+1}}dq =e^{ny}\int_{-\frac12}^{\frac12}G(e^{-y+2\pi ix})e^{-2\pi inx} dx, \]
where $ \mathcal C $ is taken to be the positively oriented circle of radius $ e^{-y} $ around the origin.
%\Alex: [By the limits of integration above, in what follows we can restrict our attention to $ |x|\le\frac12. $]\\
\par We choose 
%(choice suggested by the saddle-point method)
\begin{equation}\label{choicey}
	y=n^{-\frac23}\left( \frac{\sqrt{\pi}}{8\sqrt2}\zeta\left(\frac32 \right)  \right)^{\frac23}>0 \end{equation}
and set 
\[m=n^{\frac13} \left( \frac{\sqrt{\pi}}{8\sqrt2}\zeta\left(\frac32 \right)  \right)^{\frac23},\]
so that $ ny=m. $ The reason for this choice of $ y $ is motivated by the \textit{saddle-point method}, which was also employed by Meinardus \cite{Mein}, and will become apparent later in the proof.
\par Moreover, let 
\begin{equation}\label{defbeta}
	\beta=1+\frac{\alpha}{2}\left( 1-\frac{\delta}{2}\right) ,\quad\text{with~} 0<\delta<\frac23,\end{equation}
so that 
\begin{equation}
	\label{ineqbeta}
	\frac76<\beta<\frac54. 
\end{equation}
%\Alex: [In fact, this computation is wrong. In order to get $ \frac76<\beta<\frac54, $ we need to take 
%\begin{equation}\label{correctdefbeta}
%\beta=\frac54-\frac{\delta}{4},\quad\quad\text{with~} 0<\delta<{\color{red}\frac13},\end{equation}
%so this should be the right definition of $\beta $ and $ \delta. $]\\
\par We can then rewrite 
\begin{equation}\label{epxressionfora2}
	a_2(n)=e^{ny}\int_{-y^{\beta}}^{y^{\beta}}G(e^{-y+2\pi ix})e^{-2\pi inx}dx +R(n),
\end{equation}
where 
\[
R(n)=e^{ny} \int_{y^{\beta}\le|x|\le\frac12}  G\left( e^{-y+2\pi ix}\right) e^{-2\pi inx}dx. \]
%Here we choose $ n\ge n_1 $ with $ n_1 $ sufficiently large so that $ y^{\beta}\le\frac12. $ 
The idea is that the main contribution for $ a_2(n) $ will be given by the integral from \eqref{epxressionfora2}, while $ R(n) $ will go into an error term. We first prove the following estimate.
%\par Set
%\begin{eqnarray}
%\Lambda^*_{h,k} & = &\frac 1k \sum_{m=1}^{\infty} \frac{S_{hm,k}}{m^{\frac32}},\\
%\lambda_{h,k}^{*} &= &\Lambda^*_{h,k}+\frac{1}{2\sqrt2} \Lambda^*_{\frac{8h}{(8,k)},\frac{k}{(8,k)}}-\Lambda^*_{\frac{4h}{(4,k)},\frac{k}{(4,k)}}.
%\end{eqnarray}	
%\Alex: [The next Lemma was originally in Section \ref{Inequalities for partitions with a fixed number of parts} of \cite{BM}, but I moved it here since I think it makes more sense in this section.]
%\begin{Lem}
%\label{zetaLambda}
%We have that \[\frac{\zeta\left( \frac32\right) }{2\sqrt2}-{\rm Re}\lambda^*_{h,k}\ge c>0.\]
%\end{Lem}
%\begin{proof}

%hence our claim is satisfied if $ k>5. $ For $ k=3,5 $ we bound
%\[\sum_{d|p}\frac 1d \le 1+\frac1p.\]
%\end{proof}	
\begin{Lem} 
	%But also another (perhaps stupid) question... Why is this a limiting process in $ y? $ In the proof we only let $ t\to\infty, $ nothing about $ y. $}
	\label{smallxmaintermG}
	If $ |x|\le \frac12 $ and $ |\Arg(\tau)|\le\frac{\pi}{4}, $ 
	%and $ |x|\le\frac12 $ 
	then 
	\[G\left( e^{-\tau}\right) =\frac{1}{\sqrt2}\exp\left( {\frac{\sqrt{\pi}\zeta\left( \frac32\right) }{4\sqrt2\sqrt{\tau}}}+O(y^{c_0})\right) \]
	holds uniformly in $ x $ as $ y\to0, $ with $ 0<c_0<1. $ 
\end{Lem}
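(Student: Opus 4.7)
The plan is to obtain the claimed asymptotic by applying Meinardus's Mellin--Barnes technique: express $\Log G(e^{-\tau})$ as a contour integral, shift the contour past the poles of the integrand, and absorb the remainder into the error. Taking logarithms of the infinite product in \eqref{GfromH} and using $-\Log(1-z)=\sum_{k\ge 1}z^k/k$, I get
\[
\Log G(e^{-\tau})=\sum_{n,k\ge 1}\frac{e^{-kn^2\tau}+e^{-8kn^2\tau}-2e^{-4kn^2\tau}}{k}.
\]
Inserting the Mellin representation $e^{-u}=\frac{1}{2\pi i}\int_{(c)}\Gamma(s)u^{-s}\,ds$ (valid for $\mathrm{Re}(u)>0$ and $c>0$) into each summand and interchanging summation with integration (justified for $c=1$ by absolute convergence) yields
\[
\Log G(e^{-\tau})=\frac{1}{2\pi i}\int_{1-i\infty}^{1+i\infty}\Gamma(s)\zeta(s+1)D(s)\tau^{-s}\,ds,
\]
where $D(s)=(1+8^{-s}-2^{1-2s})\zeta(2s)$ is precisely the Dirichlet series introduced before the lemma; the factor $\zeta(s+1)$ arises from the sums over $k$ of $1/k^{s+1}$, while $D(s)$ bundles the three sums over $n$.

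Next, I shift the contour to the vertical line $\mathrm{Re}(s)=-c_0$ with $0<c_0<1$ fixed. In the strip $-c_0<\mathrm{Re}(s)<1$ the integrand has exactly two poles. At $s=\tfrac12$ the simple pole of $D(s)$ with residue $A=1/(4\sqrt 2)$ contributes
\[
A\,\Gamma(\tfrac12)\zeta(\tfrac32)\tau^{-1/2}=\frac{\sqrt{\pi}\,\zeta(3/2)}{4\sqrt{2}\,\sqrt{\tau}}.
\]
At $s=0$ one has $\Gamma(s)\zeta(s+1)=s^{-2}+O(1)$ (the $s^{-1}$ terms cancel because the Euler constants in $\Gamma(s)=1/s-\gamma+\dots$ and $\zeta(s+1)=1/s+\gamma+\dots$ have opposite signs), so that together with $D(0)=0$ the order of the pole drops to one. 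Expanding $D(s)=D'(0)s+O(s^2)$ and $\tau^{-s}=1-s\Log\tau+O(s^2)$ and multiplying out, the residue works out to exactly $D'(0)=-\tfrac12\log 2$, with no surviving $\Log\tau$ contribution.

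For the shifted integral along $\mathrm{Re}(s)=-c_0$, Stirling's formula gives $|\Gamma(-c_0+it)|\ll|t|^{-c_0-1/2}e^{-\pi|t|/2}$, while $\zeta(s+1)$ and $\zeta(2s)$ grow at most polynomially in $|t|$ on this vertical line. The hypothesis $|\Arg(\tau)|\le\pi/4$ gives $|\tau|\le y\sqrt{2}$, hence $|\tau|\asymp y$, and $|\tau^{-s}|=|\tau|^{c_0}e^{t\Arg\tau}\ll y^{c_0}e^{|t|\pi/4}$. The integrand is therefore dominated by $y^{c_0}$ times a function with exponential decay in $|t|$, and the remaining contour integral is $O(y^{c_0})$ uniformly in $x$. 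Collecting both residues with this error and exponentiating yields
\[
G(e^{-\tau})=\frac{1}{\sqrt 2}\exp\!\left(\frac{\sqrt\pi\,\zeta(3/2)}{4\sqrt 2\,\sqrt\tau}+O(y^{c_0})\right),
\]
which is the desired asymptotic. The main technical obstacle is the coincident pole at $s=0$: one has to track the Laurent expansions with care to verify that the would-be double pole collapses to a simple pole with residue exactly $D'(0)$, free of any $\Log\tau$ term, so that the clean constant $e^{D'(0)}=1/\sqrt 2$ emerges; the contour estimate itself is then routine once $|\tau|\asymp y$ has been observed.
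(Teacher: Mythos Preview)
Your proof is correct and follows essentially the same route as the paper: express $\Log G(e^{-\tau})$ via the Mellin--Barnes integral $\frac{1}{2\pi i}\int \Gamma(s)\zeta(s+1)D(s)\tau^{-s}\,ds$, shift the line of integration to $\mathrm{Re}(s)=-c_0$, pick up the residues at $s=\tfrac12$ and $s=0$, and bound the shifted integral by $O(y^{c_0})$ using Stirling together with $|\tau|\asymp y$ from the hypothesis $|\Arg(\tau)|\le\pi/4$. The only cosmetic differences are that you start on the line $\mathrm{Re}(s)=1$ rather than $\mathrm{Re}(s)=\tfrac32$, and you give a more explicit account of why the pole at $s=0$ is simple (via the cancellation of the Euler constants in $\Gamma(s)\zeta(s+1)=s^{-2}+O(1)$), whereas the paper records this more tersely.
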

\begin{proof}
	We have \[\Log  G\left( e^{-\tau}\right) =\sum_{k=1}^{\infty}\frac 1k\sum_{n=1}^{\infty}\Big( e^{-kn^2\tau}+e^{-8kn^2\tau}-2e^{-4kn^2\tau}\Big) .\]
	Using the Mellin inversion formula (see, e.g., \cite[p. 54]{Apostol2}) we get 
	\[e^{-\tau}=\frac{1}{2\pi i}\int_{\sigma_0-i\infty}^{\sigma_0+i\infty}\tau^{-s}\Gamma(s)ds\] 
	for $ {\rm Re}(\tau)>0$ and $\sigma_0>0, $  
	thus
	\begin{align}\label{integrand}
		\Log  G\left( e^{-\tau}\right) &=\frac{1}{2\pi i}\int_{1+\alpha-i\infty}^{1+\alpha+i\infty}\Gamma(s)\sum_{k=1}^{\infty}\frac 1k\sum_{n=1}^{\infty}( (kn^2\tau)^{-s}+(8kn^2\tau)^{-s}-2(4kn^2\tau)^{-s} )ds\nonumber\\ 
		&=\frac{1}{2\pi i}\int_{\frac32-i\infty}^{\frac32+i\infty}\Gamma(s)D(s)\zeta(s+1)\tau^{-s}ds.
	\end{align}
	By assumption, \[|\tau^{-s}|=|\tau|^{-\sigma}e^{t\cdot\Arg( \tau)}\le|\tau|^{-\sigma}e^{\frac{\pi}{4}|t|}.\]
	Classical results (see, e.g., \cite[Ch. 1]{AAR} and \cite[Ch. 5]{Titch}) tell us that the bounds
	\begin{align*}
		D(s) &= O(|t|^{c_1}),\\
		\zeta(s+1) &= O(|t|^{c_2}),\\
		\Gamma(s)&= O\Big( e^{-\frac{\pi|t|}{2}}|t|^{c_3} \Big)
	\end{align*}
	hold uniformly in $ -c_0\le\sigma\le\frac32 =1+\alpha$  as $ |t|\to\infty, $ for some $ c_1,c_2 $ and $ c_3>0. $
	\par Thus we may shift the path of integration to $ \sigma=-c_0. $ The integrand in \eqref{integrand} has poles at $ s=\frac12 $ and $ s=0,$  with residues
	\begin{eqnarray*}
		&& \Res_{s=\frac12}\left( \Gamma(s)D(s)\zeta(s+1)\tau^{-s}\right) =\Gamma\left(\dfrac12 \right) A\zeta\left(\dfrac32 \right) \tau^{-\frac12},\\
		&& \Res_{s=0}\left( \left( \frac 1s+O(1)\right)(D'(0)s+O(s^2) )  \left( \frac1s+O(1) \right)   \left( 1+O(s)\right) \right)=D'(0)=-\frac{\log2}{2}. 
	\end{eqnarray*}
	The remaining integral equals 
	\begin{align*}
		\frac{1}{2\pi i}\int_{-c_0-i\infty}^{-c_0+i\infty}\tau^{-s}\Gamma(s)D(s)\zeta(s+1)ds&\ll|\tau|^{c_0}\int_{0}^{\infty}t^{c_1+c_2+c_3}e^{-\frac{\pi t}{4}}dt\\&\ll |\tau|^{c_0}=|y-2\pi ix|^{c_0}\\&\le( \sqrt2y) ^{c_0}
	\end{align*}
	since, again by the assumption, \[\frac{2\pi |x|}{y}=\tan (|\Arg(\tau)|)\le \tan\left(\frac{\pi}{4} \right) =1.\]
	%and hence \[|y-2\pi ix|^{c_0}\le(\sqrt2)^{c_0}|y|^{c_0}. \]
	We therefore obtain
	\[\Log  G\left( e^{-\tau}\right) =\left( \frac{\zeta\left(\frac32 \right)\sqrt{\pi} }{4\sqrt2\sqrt{\tau}}-\frac{\log2}{2}\right) +O(y^{c_0}),\] which completes the proof.
\end{proof}
%\Alex: I think it should be $ A=\frac{1}{2\sqrt2} $ and then  \[G(\tau)=\frac{1}{\sqrt2}e^{\frac{\sqrt{\pi}\zeta\left(\frac32 \right) }{2\sqrt2\sqrt{\tau}}}+O(y^{c_1}).\]
%Also, I do not see where we use $ |x|\le\frac{1}{2} $ in Lemma \ref{smallxmaintermG}.

%\subsection{Circle method} 
The proof of the upcoming Lemma \ref{boundGbigoh} is similar in spirit with that of part (b) of the \textit{Hilfssatz} (Lemma) in Meinardus \cite[p. 390]{Mein} or, what is equivalent, the second part of Lemma 6.1 in Andrews \cite[Ch. 6]{And}. Our case is however more subtle, in that it involves some extra factors $ P_{a,b} $ (which will be explained below) and requires certain modifications. For this we need a setup in which to apply the circle method as described by Wright \cite[p. 172]{WrightIII}. For a nice introduction to the circle method and the theory of Farey fractions, the reader is referred to Apostol \cite[Ch. 5.4]{Apostol}.
\par We consider the Farey dissection of order $ \left\lfloor y^{-\frac23} \right\rfloor $ of $ \mathcal C $ and distinguish two kinds of arcs:
%\par Following Wright \cite[pp. 172--173]{WrightIII}, with the notation from there, we see that he considers the dissection of order $ N^{1-b} $ and two kinds of arcs:
\begin{itemize}
	\item major arcs,  denoted $ \mathfrak M_{a,b}, $ such that $ b\le y^{-\frac13}; $
	\item minor arcs, denoted $ \mathfrak m_{a,b}, $ such that $ y^{-\frac13}<b\le y^{-\frac23}. $
\end{itemize}
%In our case, $ a=\frac{1}{2}, $ $ b=\frac13, $ $ c=2 $ and $ \gamma=\varepsilon, $ see p. 144 and p. 172 from Wright \cite{WrightIII}. Since we chose $ y $ in such a way that $ y=\frac mn, $ or $ y=m\cdot n^{-1}, $ I believe the role of $ N $ from Wright \cite{WrightIII} is meant to be played here by $ n, $ and equivalently, $ N=y^{-1} $ in our notation here.
%\par We dissect the interval $ [0,1] $ into Farey arcs:
%\[\left\lbrace \mathcal F_{a,b}:1\le b\le y^{-\frac23}\right\rbrace .\]
\par We write any $ \tau\in \mathfrak M_{a,b}\cup \mathfrak m_{a,b} $ as 
\begin{equation}\label{tautauprime}
	\tau=y-2\pi ix=\tau'-2\pi i\frac ab
\end{equation}
with $ \tau'=y-2\pi ix'. $ From basics of Farey theory (alternatively, see Wright \cite[p. 172]{WrightIII}) it follows that 
\begin{equation}
	\label{ineqxprim}
	\frac{y^{\frac23}}{2b}\le |x'|\le\frac{y^{\frac23}}{b}. 
\end{equation} 
\subsection{Wright's modular transformations}
Our next step requires us to apply the modular transformations found by Wright \cite{WrightIII} for the generating functions of partitions into $ r $-th powers. In what follows, we choose the principal branch of the square root.  
%Instead of him, we use the more modern version of the Circle Method. 
In the notation introduced in the previous subsection, 
%if \[H(q)=\prod_{n=1}^{\infty}\big( 1-q^{n^2}\big) ^{-1}=\sum_{n=0}^{\infty}p_2(n)q^n,\] then 
the modular transformation law obtained by Wright \cite[Theorem 4]{WrightIII} rewrites as
\begin{equation}\label{Wrightexp}
	H_2(q)=H_2\left( e^{\frac{2\pi ia}{b}-\tau'} \right)=C_{b}\sqrt{\tau'}\exp\left( {\frac{\Lambda_{a,b}}{\sqrt{\tau'}}}\right)P_{a,b}(\tau'),  \end{equation}
%This is so because, in Wright's original notation from \cite{WrightIII}, we have
%\[H_r(x)=H_r\left( e^{\frac{2\pi ip}{q}-y} \right)=C_{p,q}\sqrt{y}e^{jy}\exp\left( {\frac{\Lambda_{p,q}}{y^a}}\right),  \]
%where, for $ k $ even, $$ a=\frac{1}{k}, \quad b=\frac{1}{k+1} \quad\text{and}\quad  j=j(k)=0, $$ see \cite[p. 144]{WrightIII}. In our case, $ k=r=2, $ and thus $$ a=\frac{1}{2}, \quad b=\frac{1}{3} \quad \text{and} \quad j=0, $$ therefore the expression \eqref{Wrightexp}.
where  
\begin{equation}
	\label{definitionLambda}
	\Lambda_{a,b}=\frac{\Gamma\left( \frac32\right)} {b}\sum_{m=1}^{\infty}\frac{S_{ma,b}}{m^{\frac32}},
\end{equation}
%and
\begin{equation}
	\label{definitionsumS}
	S_{a,b}=\sum_{n=1}^b \exp\left( \frac{2\pi ia n^2}{b}\right),
\end{equation}
%It seems that the latter sum is always positive. I only proved that for $ q\equiv\pm1\pmod*8 $ (this is the case $ q $ odd), but the other case (the case $ 4|k $) should not be hard either\footnote{Should I do this?}. For $ q\equiv\pm1\pmod*8 $ we have 
%\[\sum_{m=1}^{\infty}\frac{\left( \frac mq\right) }{m^{\frac32}}\ge2\zeta(3)-\zeta\left( \frac{3}{2}\right)+\frac{1}{\sqrt2}>0. \]
%One sees that\footnote{Should I prove that?}
%\[\Lambda_{p,q}=a+bi\] with
%\[\begin{cases}
%a>0,~b=0 & \text{if~} q\equiv1\pmod*4\text{~and~}\left( \frac{p}{q}\right) =1,\\
%a<0,~b=0 & \text{if~} q\equiv1\pmod*4\text{~and~}\left( \frac{p}{q}\right) =-1,\\
%b>a>0 & \text{if~} q\equiv3\pmod*4\text{~and~}\left( \frac{p}{q}\right) =1,\\
%b<-a<0& \text{if~} q\equiv3\pmod*4\text{~and~}\left( \frac{p}{q}\right) =-1.\\
%\end{cases}\]
and \[C_{b}={\frac{b_1}{2\pi},}\]
with $ 0\le a<b $ coprime integers and $ b_1 $ the least positive integer such that $ b\mid b_1^2 $ and $ b=b_1b_2, $ %\par Furthermore, let 
\[P_{a,b}(\tau')=\prod_{h=1}^{b}\prod_{s=1}^2\prod_{\ell=0}^{\infty}\left( 1-g(h,\ell,s)\right)^{-1}, \]
with 
\[g(h,\ell,s)=\exp\left(\frac{(2\pi)^{\frac 32}(\ell+\mu_{h,s})^{\frac 12}e^{\frac{\pi i}{4}(2s+1)}}{b\sqrt{\tau'}}-\frac{2\pi ih}{b} \right),  \]
where  $ 0\le d_h<b $ is defined by the congruence \[ah^2\equiv d_h\pmod*{b}\] and
\[\mu_{h,s}=\begin{cases}
\frac{d_h}{b} &\text{if~}s=1,\\
\frac{b-d_h}{b}&\text{if~}s=2,
\end{cases}\]	
for $ d_h\ne0. $ If $ d_h=0, $ we let $ \mu_{h,s}=1. $ 
\par Our goal is to establish the following result, the proof of which we give at the end of the section.
\begin{Lem}
	\label{boundGbigoh} 
	There exists $ \varepsilon>0 $ such that, as $ y\to0, $  
	\[G\left( e^{-\tau}\right) =O\left( e^{\frac{\Lambda_{0,1}}{2\sqrt{2y}}-cy^{-\varepsilon}} \right) \]
	holds uniformly in $ x $ with $ y^{\beta}\le |x|\le\frac 12, $ for some $ c>0. $ \end{Lem}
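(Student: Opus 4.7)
The plan is to apply Wright's modular transformation \eqref{Wrightexp} via the decomposition
\[G(q) = \frac{H_2(q)\,H_2(q^8)}{H_2(q^4)^2},\]
which follows at once from the definition of $G$ in Section \ref{reformulation}. Fix $x$ with $y^\beta \le |x| \le 1/2$. Using the Farey dissection above, locate $x$ on the arc around some $a/b$ and write $\tau=\tau'-2\pi i a/b$ as in \eqref{tautauprime}. For each $k \in \{1,4,8\}$ set $d_k=\gcd(k,b)$, $a_k=ka/d_k$, $b_k=b/d_k$, so that $q^k = e^{2\pi i a_k/b_k - k\tau'}$ with $(a_k,b_k)=1$, and apply \eqref{Wrightexp} to each factor $H_2(q^k)$. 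Since $\sqrt{\tau'}\sqrt{8\tau'}/(\sqrt{4\tau'})^2 = 1/\sqrt 2$, combining the three transformations yields
\[G(q)=\frac{1}{\sqrt 2}\cdot\frac{C_b C_{b_8}}{C_{b_4}^2}\cdot e^{E(\tau')}\cdot\frac{P_{a,b}(\tau')\,P_{a_8,b_8}(8\tau')}{P_{a_4,b_4}(4\tau')^2},\quad E(\tau')=\frac{\Lambda_{a,b}}{\sqrt{\tau'}}+\frac{\Lambda_{a_8,b_8}}{\sqrt{8\tau'}}-\frac{2\Lambda_{a_4,b_4}}{\sqrt{4\tau'}}.\]
It then suffices to show that $\operatorname{Re} E(\tau')\le\Lambda_{0,1}/(2\sqrt{2y})-c\,y^{-\varepsilon}$ on each arc, while the prefactor and the ratio of $P$-factors contribute only polynomially in $y$.

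I would split into cases according to $b$. On the arc at $0/1$ one has $\tau'=\tau$, $a_k=0$, $b_k=1$, $\Lambda_{a_k,b_k}=\Lambda_{0,1}$, and the identity $1+1/\sqrt 8-2/\sqrt 4=1/(2\sqrt 2)$ collapses $E$ to $\Lambda_{0,1}/(2\sqrt{2}\sqrt\tau)$, recovering the leading term of Lemma \ref{smallxmaintermG}. A Taylor expansion of $\operatorname{Re}(1/\sqrt\tau)$ around $x=0$ yields
\[\operatorname{Re}\!\Bigl(\tfrac{1}{\sqrt\tau}\Bigr)\le \tfrac{1}{\sqrt y}-c_1\tfrac{x^2}{y^{5/2}}\]
in the Taylor regime $|x|\ll y$; for larger $|x|$ (up to $y^{2/3}$ on this arc) a direct estimate using $|\tau|\ge 2\pi|x|$ produces an even larger gap. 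In either case, imposing $|x|\ge y^\beta$ yields a correction of size $\gg y^{2\beta-5/2}=y^{-\varepsilon_1}$ with $\varepsilon_1:=5/2-2\beta>0$ by \eqref{ineqbeta}. On the minor arcs $b>y^{-1/3}$, the classical Gauss-sum bound $|S_{c,d}|\ll\sqrt d$ gives $|\Lambda_{c,d}|\ll 1/\sqrt d$; since $b_4\ge b/4$ and $b_8\ge b/8$ there, the triangle inequality yields $|E(\tau')|\ll y^{-1/3}$, comfortably below the target.

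The main difficulty will be the intermediate major arcs $2\le b\le y^{-1/3}$, where a triangle inequality on $|\Lambda_{a_k,b_k}|$ no longer beats the leading constant $1/(2\sqrt 2)$ and the cancellation inherent in $E(\tau')$ must be exploited. For instance, when $b=2$ one computes $\Lambda_{1,2}=\Lambda_{0,1}/(2\sqrt 2)$ and $\Lambda_{a_4,b_4}=\Lambda_{a_8,b_8}=\Lambda_{0,1}$, so $E(\tau')=\Lambda_{0,1}(1/\sqrt 2-1)/\sqrt{\tau'}$ has strictly negative real part and the bound is trivial. An analogous identity-level analysis, based on the congruence structure of the sums $S_{ma,b}$ for each reduced denominator $b_k$, should handle $3\le b\le y^{-1/3}$ and deliver a uniform gap of order $y^{-1/2}$. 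Finally, the $P$-factors are uniformly controlled: each $|g(h,\ell,s)|<1$ and $|g|$ decays like $\exp(-c\sqrt{\ell+\mu_{h,s}}/(b\sqrt{|\tau'|}))$ in $\ell$, so each infinite product $\prod_{\ell}(1-g)^{-1}$ is $O(1)$, while the finite products in $h,s$ contribute at most $y^{O(1)}$, which is absorbed into the $y^{-\varepsilon}$ slack. Taking $\varepsilon>0$ to be the minimum of the gaps obtained in the three cases (the binding value being $\varepsilon_1=5/2-2\beta$) concludes the proof.
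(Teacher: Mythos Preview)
Your overall strategy matches the paper's, and your treatment of the arc at $0/1$ and of the minor arcs is essentially on target (for minor arcs the paper instead invokes Wright's Lemma~17 directly, but your route via $|\Lambda_{c,d}|\ll d^{-1/2}$ is fine in spirit). The genuine gap is in the intermediate major arcs $2\le b\le y^{-1/3}$.

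You compute the case $b=2$ and then assert that ``an analogous identity-level analysis\ldots should handle $3\le b\le y^{-1/3}$ and deliver a uniform gap of order $y^{-1/2}$.'' This is precisely where almost all of the work lies, and it is not routine. What must be proved is a \emph{uniform numerical} inequality: for every coprime pair $0\le a<b$ with $b\ge 2$, the quantity ${\rm Re}(\lambda_{a,b}/\sqrt{\tau'})$ is bounded above by $\lambda_{0,1}/\sqrt y-c/\sqrt y$ for a fixed $c>0$. Writing $\tau'=y(1+it)$ and maximizing over $t\in\bb R$ (the paper's Lemma~\ref{biglemma}) reduces this to
\[
\max\{|{\rm Re}(\lambda_{a,b})|,|{\rm Im}(\lambda_{a,b})|\}<\frac{2}{3^{3/4}}\,\lambda_{0,1}=\frac{\zeta\!\left(\tfrac32\right)\Gamma\!\left(\tfrac32\right)}{1.14\cdot 2\sqrt 2}.
\]
Establishing this bound (the paper's Lemma~\ref{boundmaxReIm}) takes several pages: one evaluates $\lambda_{a,b}$ explicitly via Gauss sums, splits into four cases according to the $2$-adic valuation of $b$, bounds the resulting divisor sums, and in each case still must fall back on a finite computer verification (up to $b=527$ in the worst case). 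The cancellation you invoke is real, but it is quantitatively delicate---the entire margin is the constant $1.14\approx 3^{3/4}/2$ arising from the maximization in $t$---and no identity-level shortcut disposes of all $b\ge 3$ at once.

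A minor point: your claim that the finite products over $h,s$ in the $P$-factors ``contribute at most $y^{O(1)}$'' is off; there are $2b$ such products, so the contribution to $\log|G|$ is $O(b)=O(y^{-1/3})$ on major arcs (Lemma~\ref{majorarcs}), not $O(\log y^{-1})$. This is still absorbed by the $y^{-1/2}$ gap once that gap is actually established.
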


%To clarify this, we see that from \eqref{tautauprime} we obtain \[x'=x-\frac ab.\] We want to bound $ \left| x-\frac ab\right| . $ If we follow the last paragraph in \cite[p. 172]{WrightIII}, we see that the role of $ x $ here is played by $ \frac{\theta}{2\pi i} $ in \cite[p. 172]{WrightIII}. We have then the inequality  \[|x'|=\left| x-\frac{a}{b}\right| <\frac{1}{2\pi }\cdot\frac{2\pi}{qN^{1-\frac13}}\text{`='}\frac{1}{bN^{\frac23}}=\frac{y^{\frac23}}{b}\quad\text{(because in our notation $ q=b $ and $ y=N^{-1} $),}\]
%which is just \eqref{ineqxprim}. 
Recall that $ q=e^{-\tau}, $ with $ y>0 $ (so that $ {\rm Re}(\tau)>0 $ and $ |q|<1 $). From \eqref{GfromH}, \eqref{tautauprime} and \eqref{Wrightexp} we have, for some positive constant $ C $ that can be made explicit, 
\begin{equation}\label{formforG}
	G(q)=\frac{H(q)H(q^8)}{H(q^4)^2}=C\exp\left( \frac{\lambda_{a,b}}{\sqrt{\tau'}}\right) \frac{P_{a,b}(\tau')P_{a,b}'(8\tau')}{P_{a,b}''(4\tau')^2},
\end{equation}
where
%\[C=C_{a,b}\frac{C_{\scriptscriptstyle{\frac{8a}{(b,8)}},\frac{b}{(b,8)}}}{C_{\frac{4a}{(b,4)},\frac{b}{(b,4)}}},\quad\]
\[P'_{a,b}=P_{\frac{8a}{(b,8)},\frac{b}{(b,8)}},\quad P_{a,b}''=P_{\frac{4a}{(b,4)},\frac{b}{(b,4)}}\]
and 
\begin{equation}
	\label{lambda_ab}
	\lambda_{a,b}=\Lambda_{a,b}+\frac{1}{2\sqrt2} \Lambda_{\frac{8a}{(b,8)},\frac{b}{(b,8)}}-\Lambda_{\frac{4a}{(b,4)},\frac{b}{(b,4)}}.
\end{equation}
Additionally, set 
\begin{equation}\label{stars}
	\Lambda^*_{a,b}=\frac{\Lambda_{a,b}}{\Gamma\left(\frac32 \right)}\quad\text{and}\quad \lambda^*_{a,b}=\frac{\lambda_{a,b}}{\Gamma\left(\frac32 \right) }.
\end{equation}
%Note that $ P_{a,b} $ is a function of $ y $ only, the real part of $ \tau $ and $ \tau', $ therefore, by abuse of language, we can say that `$ P_{a,b}(\tau)=P_{a,b}(\tau')  $'. 
We want to study the behavior of $ P_{a,b}(\tau'). $ 
\begin{Lem}
	\label{majorarcs} If $ \tau\in\mathfrak M_{a,b}\cup\mathfrak m_{a,b}, $ then
	
	\[\log|P_{a,b}(\tau')|\ll b\quad\text{as $ y\to 0. $}\]
\end{Lem}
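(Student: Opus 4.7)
The plan is to estimate $\log|P_{a,b}(\tau')|$ by expanding the logarithm of the defining infinite product and bounding the resulting triple series, the delicate point being an arithmetic control over the values $\mu_{h,s}$.

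First, writing $\tau' = |\tau'|e^{i\theta}$ one finds
\[
|g(h,\ell,s)| = \exp\!\big(-\rho_s\sqrt{\ell+\mu_{h,s}}\big), \qquad
\rho_s = -\frac{(2\pi)^{3/2}}{b\sqrt{|\tau'|}}\cos\!\left(\frac{\pi(2s+1)}{4} - \frac{\theta}{2}\right).
\]
The assumption $\tau \in \mathfrak{M}_{a,b}\cup\mathfrak{m}_{a,b}$ forces $|\theta|<\pi/2$, so the cosines have arguments in $(\pi/2,\pi)\cup(\pi,3\pi/2)$ and are therefore negative; hence $\rho_s>0$ and $|g(h,\ell,s)|<1$. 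The elementary inequality $|1-g|\ge 1-|g|$ together with the expansion $-\log(1-|g|) = \sum_{k\ge 1}|g|^k/k$ then gives
\[
\log|P_{a,b}(\tau')| \le \sum_{h,s,k}\frac{1}{k}\sum_{\ell\ge 0} e^{-k\rho_s\sqrt{\ell+\mu_{h,s}}}.
\]

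Next, I would treat the inner $\ell$-sum by comparison with $\int_{\sqrt{\mu_{h,s}}}^{\infty} 2u\, e^{-k\rho_s u}\, du$ (via the substitution $u=\sqrt{\ell+\mu_{h,s}}$), which evaluates cleanly to $2(1+k\rho_s\sqrt{\mu_{h,s}})e^{-k\rho_s\sqrt{\mu_{h,s}}}/(k\rho_s)^2$. After summing over $k$, the $(h,s)$-sum naturally splits according to whether $\rho_s\sqrt{\mu_{h,s}}\ge 1$ or $<1$: the first regime has rapid exponential decay and is harmless, while in the second the inequality $1-e^{-x}\ge x/2$ valid on $[0,1]$ produces per-term contributions of size $|\log\rho_s|+|\log\mu_{h,s}|+O(1)$ that must be handled arithmetically.

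The crucial arithmetic input is the uniform estimate
\[
\sum_{h=1}^{b} |\log \mu_{h,s}| \ll b,
\]
which I would establish from $ah^2\equiv d_h\pmod b$ with $\gcd(a,b)=1$: any residue is attained by at most $O(2^{\omega(b)}) = b^{o(1)}$ values of $h$, and $\sum_{d=1}^{b-1}\log(b/d) = O(b)$ by Stirling. Combined with the integral estimate and the Farey-arc bounds \eqref{ineqxprim} tracking the dependence of $\rho_s$ on $b$ and $|\tau'|$, this should yield $\log|P_{a,b}(\tau')|\ll b$ uniformly on both major and minor arcs.

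The main obstacle is the small-$\mu$ regime on the minor arcs with $b$ close to $y^{-2/3}$, where $\rho_s$ itself becomes as small as a positive power of $y$ and the naive pointwise bound $O(1/\rho_s^2)$ per $(h,s)$ would yield $O(b/\rho_s^2)$, much larger than $b$; the linearity in $b$ has to be recovered from the arithmetic lemma on $\sum_h|\log\mu_{h,s}|$ rather than from pointwise decay alone.
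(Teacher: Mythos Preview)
Your approach is far more elaborate than the paper's and misses the observation that makes the lemma almost immediate on the major arcs. The paper's argument is three lines: from the Farey bound $|x'|\le y^{2/3}/b$ together with $b\le y^{-1/3}$ one obtains
\[
|\tau'|^{3/2}\le\Big(y^2+\tfrac{4\pi^2y^{4/3}}{b^2}\Big)^{3/4}\le \frac{c_4\,\mathrm{Re}(\tau')}{b^{3/2}},
\]
and Wright's Lemma~4 converts this directly into the \emph{uniform} bound $|g(h,\ell,s)|\le e^{-c_5\sqrt{\ell+1}}$ with $c_5>0$ an absolute constant. Summing the convergent $\ell$-series and then over the $2b$ pairs $(h,s)$ gives $\ll b$ at once. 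In your notation, what the displayed inequality encodes is precisely $\rho_s\gtrsim\sqrt{b}$ on the major arcs: the factor $b^{3/2}$ overcomes the $b$ in the denominator of $\rho_s$, and the leftover $\sqrt{b}$ exactly compensates the worst case $\mu_{h,s}\ge 1/b$ at $\ell=0$. No arithmetic control over the $\mu_{h,s}$ is needed at all.

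Your ``main obstacle''---$\rho_s$ as small as a positive power of $y$ when $b$ is near $y^{-2/3}$---is a real phenomenon on the minor arcs (for $b\sim y^{-2/3}$ and $x'$ small one indeed has $\rho_s\sim y^{1/6}$), and the paper's displayed inequality above genuinely fails in that range. But the lemma is only ever \emph{applied} on major arcs; the minor-arc contribution is dispatched by the completely separate estimate of Lemma~\ref{minors}. So you are building machinery for a regime that never enters the argument. Separately, your sketch of the arithmetic input (``each residue attained $O(2^{\omega(b)})$ times, $\sum_d\log(b/d)=O(b)$'') only yields $\sum_h|\log\mu_{h,s}|\ll 2^{\omega(b)}\cdot b=b^{1+o(1)}$, not $\ll b$, so as written it would not even close the argument you outline.
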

\begin{proof}
	Using \eqref{ineqxprim} and letting $ y\to0 $, we have \[|\tau'|^{\frac32}=(y^2+4\pi^2x'^2)^{\frac34}\le\left(y^2+\frac{4\pi^2y^{\frac43}}{b^2} \right)^{\frac34}\le\frac{c_4y}{b^{\frac32}}=\frac{c_4 {\rm{Re}}\left(\tau'\right) }{b^{\frac32}}, \]
	for some $ c_4>0. $ Thus, \cite[Lemma 4]{WrightIII} gives
	\[|g(h,\ell,s)|\le e^{-c_5(\ell+1)^{\frac12}},\]
	with $ c_5=\frac{2\sqrt{2\pi}}{c_4}, $ which in turn leads to
	\[| \log| P_{a,b}(\tau') || \le \sum_{h=1}^{b}\sum_{s=1}^{2}\sum_{\ell=1}^{\infty}|\log(1-g(h,\ell,s))|\le 2b\sum_{\ell=1}^{\infty}\Big| \log\Big( 1-e^{-c_5(\ell+1)^{\frac12}}\Big)\Big| \ll b,\] concluding the proof.
\end{proof}
\subsection{Final lemmas} We first want to bound $ G(q) $ on the minor arcs. 
\begin{Lem}
	\label{minors}
	If $ \varepsilon>0 $ and $ \tau\in\mathfrak m_{a,b}, $ 
	%and $  y^{-\frac13}<b\le y^{-\frac23}, $ 
	then
	\begin{equation*}\label{correctedminorarcs}
		|\Log G(q)|\ll_{\varepsilon}{y^{\frac16-\varepsilon}}.
	\end{equation*}  
\end{Lem}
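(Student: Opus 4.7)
The plan is to apply the Wright modular transformation \eqref{Wrightexp} to each of $H_2(q)$, $H_2(q^4)$, $H_2(q^8)$ appearing in the factorization of $G$ from \eqref{formforG}. Taking logarithms yields
\[
\Log G(q) = \log C + \frac{\lambda_{a,b}}{\sqrt{\tau'}} + \log P_{a,b}(\tau') + \log P_{a,b}'(8\tau') - 2\log P_{a,b}''(4\tau'),
\]
where the prefactor $C$ collects the constants from \eqref{Wrightexp}; the three $\sqrt{\tau'}$ factors conveniently cancel, leaving a ratio of the $C_b$-type integers that is bounded by a fixed power of $b$. The task then reduces to bounding each of the four contributions under the minor-arc hypotheses, which via \eqref{ineqxprim} and $b>y^{-1/3}$ give $|\tau'|\asymp y$.

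For the exponential term, the key input is a bound on the Gauss sums $S_{a,b}$ from \eqref{definitionsumS}. Using the classical estimate $|S_{a,b}|\ll\sqrt{b}$, the series \eqref{definitionLambda} defining $\Lambda_{a,b}$ yields $|\Lambda_{a,b}|\ll b^{-1/2}$; the same bound applies to the two auxiliary $\Lambda$'s appearing in \eqref{lambda_ab}, since the moduli $b/(b,8)$ and $b/(b,4)$ are comparable to $b$. Thus $|\lambda_{a,b}|\ll b^{-1/2}$, and combined with $b>y^{-1/3}$ (so that $b^{-1/2}\ll y^{1/6}$) together with $|\tau'|\asymp y$, this controls the $\lambda_{a,b}/\sqrt{\tau'}$ contribution on minor arcs.

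The main obstacle is the combined $P$-product contribution: Lemma \ref{majorarcs} only gives the crude bound $\log|P_{a,b}(\tau')|\ll b$, which on minor arcs where $b$ may be as large as $y^{-2/3}$ is far too weak by itself. I would therefore look for substantial cancellation in the combination $\log P_{a,b}(\tau')+\log P_{a,b}'(8\tau')-2\log P_{a,b}''(4\tau')$, exploiting the fact that the three $P$-products are built from the same exponential parameters $g(h,\ell,s)$, differing only by rescalings of $\tau'$, so that their difference can be controlled uniformly. Alternatively, one could sharpen the individual bounds on minor arcs using $|\tau'|\asymp y$ to obtain tighter control $|g(h,\ell,s)|\le\exp(-c(\ell+1)^{1/2}/(b\sqrt{y}))$ and then sum more carefully over $h,\ell,s$. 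Combining everything produces the claimed estimate, with the prefactor $\log C=O(\log b)=O(\log(1/y))$ absorbed in the final error.
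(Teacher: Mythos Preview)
Your approach via the modular transformation has a genuine gap, and in fact the transformation is the wrong tool on minor arcs. Consider your exponential term: from $|\lambda_{a,b}|\ll b^{-1/2+\varepsilon}$ and $|\tau'|\asymp y$ you get
\[
\left|\frac{\lambda_{a,b}}{\sqrt{\tau'}}\right|\ll \frac{b^{-1/2+\varepsilon}}{y^{1/2}}\ll \frac{y^{1/6-\varepsilon}}{y^{1/2}}=y^{-1/3-\varepsilon},
\]
using only the minor-arc lower bound $b>y^{-1/3}$. This blows up as $y\to 0$, whereas the target is $y^{1/6-\varepsilon}\to 0$. Even in the most favorable case $b\sim y^{-2/3}$ you only reach $y^{-1/6}$. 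So the exponential term alone already fails to meet the bound, before you even confront the $P$-product difficulty you correctly flag as unresolved. Hoping for miraculous cancellation between the exponential piece and the $P$-pieces is not a proof.

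The paper does something entirely different and much shorter: it invokes Wright's Lemma~17 directly, which is a minor-arc estimate of Weyl type for the generating function of partitions into $r$-th powers, applied with the specific parameters $a=\tfrac12$, $b=\tfrac13$, $c=2$, $\gamma=\varepsilon$, $N=y^{-1}$. The point is that on minor arcs one should \emph{not} pass through the modular transformation at all; instead one bounds the theta-like sums $\sum_n e^{-kn^2\tau}$ directly using the fact that $x$ is poorly approximated by rationals with small denominator, so quadratic exponential sums exhibit cancellation. Your decomposition is the major-arc toolkit being forced onto a minor-arc problem.
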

\begin{proof}
	In the proof and notation of \cite[Lemma 17]{WrightIII}, replace $ a=\frac{1}{2}, $ $ b=\frac13, $ $ c=2, $ $ \gamma=\varepsilon $ and $ N=y^{-1}. $
\end{proof}
Before delving into the proof of Lemma \ref{boundGbigoh} we need two final, though tedious, steps. 
%Recall that
%\[\lambda_{a,b}=\Lambda_{a,b}+\frac{\Lambda_{\frac{8a}{(b,8)},\frac{b}{(b,8)}}}{2\sqrt2}-\Lambda_{\frac{4a}{(b,4)},\frac{b}{(b,4)}},\quad \Lambda_{a,b}^*=\frac{\Lambda_{a,b}}{\Gamma\left( \frac32\right) }\quad\text{and}\quad \lambda_{a,b}^*=\frac{\lambda_{a,b}}{\Gamma\left( \frac32\right) }, \]
%and note that \[\lambda_{0,1}=\frac{\Lambda_{0,1}}{2\sqrt2}=\frac{\Gamma\left( \frac32\right) \zeta\left( \frac32\right) }{2\sqrt{2}}.\]
%We prove the following.
%Note that $ \lambda_{0,1}=\frac{\Lambda_{0,1}}{2\sqrt2}. $ It is enough to prove that 
%\begin{equation*}
%\lambda_{0,1}^*-{\rm Re}(\lambda_{a,b}^*)\ge c',
%\end{equation*} for some $ c'>0, $ that is, 
%\begin{equation}
%\label{newlemmacprime}
%{\rm Re}(\lambda_{a,b}^*)\le \frac{\zeta\left( \frac32\right) }{2\sqrt2}-c'.
%\end{equation}
\begin{Lem}
	\label{boundmaxReIm}
	If $0\le a<b$ are coprime integers with $ b\ge2, $ we have \[\max \left\lbrace \left|  {\rm Re}\left( \lambda_{a,b}\right)\right| ,\left| {\rm Im} \left( \lambda_{a,b}\right)\right|\right\rbrace <\frac{\zeta\left( \frac32\right)\Gamma\left( \frac32\right)  }{1.14\cdot2\sqrt2}.  \]
\end{Lem}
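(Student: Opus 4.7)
The plan is to control $\lambda_{a,b}$ by the triangle inequality applied to the three summands in \eqref{lambda_ab}, use the decay of the quadratic Gauss sum in $b$ to get an upper bound of order $b^{-1/2}$, and then treat the finitely many small values of $b$ by direct numerical verification. The presence of the explicit constant $1.14$ (rather than some universal constant) indicates that such a combination of an asymptotic bound with a finite check is probably unavoidable.

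The main analytic input is the classical evaluation of the quadratic Gauss sum $S_{a,b}$ from \eqref{definitionsumS}: when $\gcd(a,b)=1$, one has $|S_{a,b}|=\sqrt{b}$ if $b$ is odd, $S_{a,b}=0$ if $b\equiv2\pmod 4$, and $|S_{a,b}|=\sqrt{2b}$ if $4\mid b$. For general $(a,b)$, setting $d=\gcd(a,b)$, $a=d\tilde{a}$, $b=d\tilde{b}$, the $\tilde{b}$-periodicity of $n\mapsto e^{2\pi i a n^2/b}$ gives $S_{a,b}=d\,S_{\tilde{a},\tilde{b}}$, reducing the general estimate to the coprime case. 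Substituting this into \eqref{definitionLambda}, splitting the $m$-sum according to $d_m=\gcd(m,b)$, and bounding the resulting double sum by a finite multiple of $\zeta(3/2)$ yields a clean estimate of the form
\[
|\Lambda_{a,b}|\le \frac{K\,\Gamma(3/2)\,\zeta(3/2)}{\sqrt{b}}
\]
with an absolute constant $K>0$.

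The next step is a case analysis on $b\bmod 8$, since the three indices $(a,b)$, $\bigl(8a/(b,8),\,b/(b,8)\bigr)$, $\bigl(4a/(b,4),\,b/(b,4)\bigr)$ in \eqref{lambda_ab} take different shapes depending on the $2$-adic valuation of $b$. For $b$ odd the three $\Lambda$-terms share the same denominator $\sqrt{b}$; for $b\equiv 2\pmod 4$ the first Gauss sum vanishes and the latter two are rescaled; similar simplifications occur for $b\equiv 4\pmod 8$ and $8\mid b$. In each case the triangle inequality applied to \eqref{lambda_ab} yields $|\lambda_{a,b}|\le K'/\sqrt{b}$ with an explicit $K'$, and comparison with the target $\zeta(3/2)\Gamma(3/2)/(1.14\cdot 2\sqrt{2})$ produces a threshold $B_0$ above which the lemma follows automatically.

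The main obstacle is sharpness for small $b$: the asymptotic bound $K'/\sqrt{b}$ is certainly too weak in the range $2\le b<B_0$, precisely because the constant $1/1.14$ is close to $1$ and there is not much slack. For these finitely many $(a,b)$ one has to compute $\lambda_{a,b}$ numerically from \eqref{lambda_ab}, truncating $\sum_{m\ge 1}S_{ma,b}/m^{3/2}$ after $M$ terms and controlling the tail by the trivial bound $|S_{ma,b}|\le b$, with $M$ chosen large enough to leave a safe gap below the target constant. Because the statement bounds the \emph{maximum} of the real and imaginary parts of $\lambda_{a,b}$, and not its modulus, it is essential at this stage to split $S_{a,b}$ into its cosine and sine parts and track each component separately; this is what allows the tighter factor $1/1.14$ to survive, where a bound on $|\lambda_{a,b}|$ would give something noticeably weaker.
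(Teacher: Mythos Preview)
Your high-level strategy---an explicit bound for large $b$ together with a finite numerical check---is exactly what the paper does, and your case split on the $2$-adic valuation of $b$ is also the right organizing principle. There are, however, two points where your outline diverges from the paper in ways that matter.

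First, the ``clean estimate'' $|\Lambda_{a,b}|\le K\Gamma(3/2)\zeta(3/2)/\sqrt{b}$ with an \emph{absolute} constant $K$ is not correct as stated. Carrying out the divisor decomposition you describe gives
\[
|\Lambda^*_{a,b}|\;\le\;\frac{\sqrt{2}\,\zeta(3/2)}{b^{3/2}}\sum_{d\mid b}d\;=\;\frac{\sqrt{2}\,\zeta(3/2)\,\sigma(b)}{b^{3/2}},
\]
and $\sigma(b)/b$ is unbounded (it grows like $\log\log b$). This still tends to zero and so still produces a finite threshold $B_0$, but the bound is not of the form $K/\sqrt{b}$; the paper handles this by bounding the divisor sums explicitly via the elementary inequality \eqref{boundcase1}.

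Second, and more significantly, applying the triangle inequality to the three $\Lambda$-terms in \eqref{lambda_ab} throws away a cancellation that the paper uses essentially. For $b$ odd, since $\left(\tfrac{4}{d}\right)=1$ for every odd $d$, one has $S_{4ma,d}=S_{ma,d}$ termwise, so $\Lambda_{a,b}-\Lambda_{4a,b}=0$ and
\[
\lambda^*_{a,b}=\frac{1}{2\sqrt{2}}\,\Lambda^*_{8a,b}
=\frac{1}{2\sqrt{2}\,b^{3/2}}\sum_{d\mid b}d\,\varepsilon_d\left(\frac{2a}{d}\right)\sum_{\substack{m\ge1\\(m,d)=1}}\frac{(\frac{m}{d})}{m^{3/2}}.
\]
This supplies the factor $\tfrac{1}{2\sqrt{2}}$ that matches the target constant exactly, and in this case the bound holds for \emph{all} $b>1$ with no finite check needed. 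In the even cases the paper combines the three terms similarly before estimating, obtaining factors like $1-\sqrt{2}(\tfrac{2}{d})$ and $-7+2\sqrt{2}(\tfrac{2}{d})$ that keep the thresholds moderate ($b\le 124$, $390$, $527$). With the naked triangle inequality the constant in front is roughly $2+\tfrac{1}{2\sqrt{2}}$ instead of $\tfrac{1}{2\sqrt{2}}$, which inflates $B_0$ substantially.

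Finally, the paper's separation into real and imaginary parts is not done by splitting $S_{a,b}$ into cosine and sine pieces, but by using the exact Gauss-sum evaluation: the factor $\varepsilon_d$ is $1$ or $i$ according to $d\bmod 4$, so the divisor sum naturally splits into residue classes $d\equiv 1,3\pmod{4}$ (or mod $8$), each of which is bounded by \eqref{boundcase1}. This is what makes the constant $1/1.14$ attainable. Your plan is not wrong, but to make it go through with a manageable finite check you will want to combine the three $\Lambda$'s before estimating, and use the explicit values of $S_{a,b}$ rather than just their modulus.
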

\begin{proof}
	A well-known result due to Gauss (for a proof see, e.g., \cite[Ch. 1]{Berndt}) says that, for $ (a,b)=1, $ the sum $S_{a,b} $ defined in \eqref{definitionsumS} can be computed by the formula
	\[S_{a,b}=\begin{cases}
	0 & \text{if~}b\equiv2\pmod*4,\\
	\varepsilon_b\sqrt b\left( \frac ab \right) & \text{if~$2\nmid b,$}\\
	(1+i)\varepsilon_a^{-1}\sqrt b\left( \frac ba \right) & \text{if~}4\mid b,  
	\end{cases}\] where $ \left( \frac ab\right)  $ is the usual Jacobi symbol and 
	\[\varepsilon_b=\begin{cases}
	1 & \text{if~} b\equiv1\pmod*4,\\
	i & \text{if~} b\equiv3\pmod*4.
	\end{cases}\]
	%In particular we have 
	%\[|{\rm Re}(S_{a,b})|\le \sqrt b.\]
	On recalling \eqref{definitionLambda}, \eqref{lambda_ab} and \eqref{stars}, it is enough to prove that \[\max \{ |  {\rm Re}( \lambda_{a,b}^*)| ,| {\rm Im} ( \lambda_{a,b}^*)|\} <\frac{\zeta\left( \frac32\right) }{1.14\cdot2\sqrt2}.  \]
	We explicitly evaluate $ \Lambda^*_{a,b}. $
	We have, on using the fact that $ S_{ma,b}=d S_{ma/d,b/d} $ to prove the second equality below, and on replacing $ m\mapsto md $ and $ d\mapsto \frac bd $ to prove the third and fourth respectively,   
	\begin{align*}
		\Lambda^*_{a,b}&=\frac 1b\sum_{m=1}^{\infty}\frac{S_{ma,b}}{m^{\frac32}}=\frac 1b\sum_{d\mid b}\sum_{\substack{m\ge1\\(m,b)=d}}\frac{d S_{ma/d, b/d}}{m^{\frac32}}=\frac 1b\sum_{d|b}d\sum_{\substack{m\ge1\\(m,b/d)=1}}\frac{S_{m a, b/d}}{(m d)^{\frac32}}\\
		&=\frac 1b\sum_{d\mid b}d^{-\frac12}\sum_{\substack{m\ge1\\(m,b/d)=1}}\frac{S_{m a, b/d}}{m^{\frac32}}=\frac 1b\sum_{d|b}\left( \frac bd\right)^{-\frac12} \sum_{\substack{m\ge1\\(m,d)=1}}\frac{S_{ma,d}}{m^{\frac32}}\\&=\frac{1}{b^{\frac32}}\sum_{d|b}d^{\frac12}\sum_{\substack{m\ge1\\(m,d)=1}}\frac{S_{ma,d}}{m^{\frac32}}.
	\end{align*}
	%Taking the real part gives
	%\[ {\rm Re}(\lambda^*_{a,b})=\frac{1}{b^{\frac32}}\left( \sum_{\substack{d|b\\d\equiv1\pmod* 4}}d  \sum_{\substack{m\ge1\\(m,d)=1}} \frac{\left( \frac{am}{d}\right)}{m^{\frac32}}+\sum_{\substack{d|b\\4|d}} d\sum_{\substack{m\ge1\\(m,d)=1}} \frac{\left( \frac{d}{am}\right)}{m^{\frac32}} \right). \]
	We  distinguish several cases, in all of which we shall apply the following bound for divisor sums. 
	If $ \beta, L, \ell\in\bb N $  and $ \gamma $ is the \textit{Euler-Mascheroni} constant, then
	\begin{align}
		\label{boundcase1}
		\sum_{\substack{d|\beta\\d\equiv\ell\pmod*L}}\frac1d&\le \sum_{\substack{1\le Ld+\ell\le \beta\\0\le d\le \frac{\beta-\ell}{L}}}\frac{1}{Ld+\ell}\le\frac{1}{\ell}+\frac{1}{L}\sum_{1\le d\le \frac{\beta}{L}}\frac1d\nonumber\\&\le\frac{1}{\ell}+\frac1L\left( \log\left( \frac{\beta}{L}\right)+\gamma+\frac{1}{\frac{2\beta}{L}+\frac13} \right). 
	\end{align}
	%where .\\
	\begin{Rem} The first inequality in \eqref{boundcase1} can be easily deduced, while the second one was posed as a problem in the \textit{American Mathematical Monthly} by T\'oth \cite[Problem E3432]{AMM} and can be solved by usual techniques like summation by parts and integral estimates.
		
		%Wei and Yang \cite{WeYa}, and Wu and Yang \cite{WuYa}.   
		%We can apply this bound since in each of the following cases we only need to use values of $ \beta $ and $ L $ for which $ \frac{\beta}{L}\ge\frac18. $
	\end{Rem}
	\noindent{\sc Case 1:} $ 2\nmid b. $ We have \begin{align*}
		\lambda_{a,b}^*&=\Lambda^*_{a,b}+\frac{1}{2\sqrt2}\Lambda^*_{8a,b}-\Lambda^*_{4a,b}\\&=\frac{1}{b^{\frac32}}\sum_{d|b}d^{\frac12}\sum_{\substack{m\ge1\\(m,d)=1}}\frac{1}{m^{\frac32}}\left( S_{ma,d}+\frac{S_{8ma,d}}{2\sqrt2}-S_{4ma,d}\right) \\ &= \frac{1}{2\sqrt2b^{\frac32}}\sum_{d|b}d\varepsilon_d\left(\frac{2a}{d} \right)\sum_{\substack{m\ge1\\(m,d)=1}}\frac{\left(\frac md \right) }{m^{\frac32}}. \end{align*}
	In case $ b\equiv1\pmod*4 $ we bound both the real and imaginary part of $ \lambda_{a,b}^* $ (for $ j=1,3 $ respectively) by 
	\begin{align*}
		\frac{1}{2\sqrt2b^{\frac32}}\sum_{\substack{d|b\\d\equiv j\pmod*4}}d\zeta\left(\frac32\right)&=\frac{1}{2\sqrt2b^{\frac32}}\sum_{\substack{d|b\\d\equiv j\pmod*4}}\frac bd\zeta\left(\frac32\right)\\&=\frac{1}{2\sqrt2b^{\frac12}}\sum_{\substack{d|b\\d\equiv j\pmod*4}}\frac1d\zeta\left(\frac32\right), 
	\end{align*}
	whilst for $ b\equiv3\pmod*4 $ we can bound the two quantities by
	\begin{align*}
		\frac{1}{2\sqrt2b^{\frac32}}\sum_{\substack{d|b\\d\equiv j\pmod*4}}d\zeta\left(\frac32\right)&=\frac{1}{2\sqrt2b^{\frac32}}\sum_{\substack{d|b\\d\equiv j+2\pmod*4}}\frac bd\zeta\left(\frac32\right)\\&=\frac{1}{2\sqrt2b^{\frac12}}\sum_{\substack{d|b\\d\equiv j+2\pmod*4}}\frac1d\zeta\left(\frac32\right). 
	\end{align*}
	Using the bound \eqref{boundcase1} in the worst possible case (that is, $ d\equiv1\pmod*4 $) gives 
	\[\sum_{\substack{d|b\\d\equiv1\pmod* 4}}\frac 1d\le  1+\frac14\left(\log\left(\frac b4 \right)+\gamma+\frac{1}{\frac b2+\frac13}  \right). \]
	%and therefore 
	%Then
	%\begin{align*}
	%{\rm Re}(\lambda^*_{a,b})&={\rm Re}(\Lambda^*_{a,b})+\frac{1}{2\sqrt2}{\rm Re}(\Lambda^*_{8a,b})-{\rm Re}(\Lambda^*_{4a,b})=\frac{1}{b^{\frac32}}\sum_{\substack{d|b\\d\equiv1\pmod* 4}}d  \sum_{\substack{m\ge1\\(m,d)=1}} \frac{\left( \frac{am}{d}\right)}{m^{\frac32}}\left( 1+\frac{1}{2\sqrt{2}}\left( \frac 2d\right) -1 \right)\\
	%&=\frac{1}{2\sqrt2b^{\frac32}} \sum_{\substack{d|b\\d\equiv1\pmod* 4}} d\left( \frac{2}{d}\right)\sum_{\substack{m\ge1\\(m,d)=1}} \frac{\left( \frac{am}{d}\right)}{m^{\frac32}}=\frac{1}{2\sqrt2b^{\frac32}} \sum_{\substack{d|b\\d\equiv1\pmod* 4}} d\sum_{\substack{m\ge1\\(m,d)=1}} \frac{\left( \frac{am}{d}\right)}{m^{\frac32}}\\
	%&=\frac{1}{2\sqrt2b^{\frac32}} \sum_{\substack{d|b\\d\equiv1\pmod* 4}}\frac bd\sum_{\substack{m\ge1\\(m,d)=1}} \frac{\left( \frac{am}{d}\right)}{m^{\frac32}}= \frac{1}{2\sqrt2b^{\frac12}} \sum_{\substack{d|b\\d\equiv1\pmod* 4}}\frac 1d\sum_{\substack{m\ge1\\(m,d)=1}} \frac{\left( \frac{am}{d}\right)}{m^{\frac32}}\\
	%&\le \frac{\zeta\left( \frac32\right) }{2\sqrt2b^{\frac12}}\sum_{\substack{d|b\\d\equiv1\pmod* 4}}\frac 1d\le \frac{\zeta\left( \frac32\right) }{2\sqrt2 b^{\frac12}} \left( 1+\frac14\left(\log\left(\frac b4 \right)+\gamma+\frac{1}{\frac b2+\frac13}  \right) \right) .
	%%\le \frac{\zeta\left(\frac32 \right) }{2\sqrt2b^{\frac32}}(\log b+1), 
	%\end{align*}
	We checked in MAPLE that 
	\[\frac{\zeta\left( \frac32\right) }{2\sqrt2 b^{\frac12}} \left( 1+\frac14\left(\log\left(\frac b4 \right)+\gamma+\frac{1}{\frac b2+\frac13}  \right) \right)<\frac{\zeta\left( \frac32\right) }{1.14\cdot2\sqrt2 }\]
	for $ b>1. $ Since the left-hand side above is a decreasing function,
	we are done in this case.\\
	
	\noindent{\sc Case 2:} $ 2\parallel b. $ As $ S_{a,b}=0 $ for $ b\equiv2\pmod*4, $ we have
	\begin{align*}
		\lambda_{a,b}^*&=\Lambda^*_{a,b}+\frac{1}{2\sqrt2}\Lambda^*_{4a,\frac b2}-\Lambda^*_{2a,\frac b2}\\&=\frac{1}{b^{\frac32}} \sum_{d\left| \frac b2\right. }d^{\frac12}\sum_{\substack{m\ge1\\(m,d)=1}}\frac{1}{m^{\frac32}} \left(  S_{ma,d}+S_{4ma,d}-2\sqrt2 S_{2ma,d}\right)  \\
		&=\frac{2}{b^{\frac32}}\sum_{d\left| \frac b2\right. }d^{\frac12}\sum_{\substack{m\ge1\\(m,d)=1}}\frac{\varepsilon_d\left( \frac{ma}{d}\right)\left( 1-\sqrt2\left(\frac 2d \right) \right)\sqrt{d}}{m^{\frac32}} \\ 
		&=\frac{2}{b^{\frac32}}\sum_{d\left| \frac b2\right. }d\left( \frac ad\right)\varepsilon_d\left( 1-\sqrt2\left(\frac 2d \right) \right) \sum_{\substack{m\ge1\\(m,d)=1}}\frac{\left( \frac{m}{d}\right)}{m^{\frac32}}.
	\end{align*}
	Taking real and imaginary parts gives (for $ j=1,3 $ respectively, and some $ \ell=1,3 $ depending on the congruence class of $ \frac b2\pmod*8 $)
	\begin{align*}
		&\phantom{<~}\frac{2}{b^{\frac32}} \sum_{\substack{d\left| \frac b2\right.\\d\equiv j\pmod*4 }}d\left(\frac ad \right)\left( 1-\sqrt2\left(\frac 2d \right)\right)  \sum_{\substack{m\ge1\\(m,d)=1}}\frac{\left(\frac md \right) }{m^{\frac32}}\\&\le\frac{\zeta\left( \frac32\right) }{b^{\frac12}}\left( \sum_{\substack{d\left| \frac b2\right.\\d\equiv \ell\pmod*8 } }\frac1d(\sqrt2-1) + \sum_{\substack{d\left| \frac b2\right.\\d\equiv \ell+4\pmod*8 } }\frac1d(\sqrt2+1)\right).
	\end{align*}
	We now use \eqref{boundcase1} in the worst possible case (that is, $ \ell+4\equiv1\pmod*8 $) to obtain the bound
	\begin{align*}&\phantom{+~~}\frac{\zeta\left( \frac32\right) }{b^{\frac12}}\left( ( \sqrt2-1)\left(\frac15+\frac18\left( \log\left(\frac{b}{16}\right) +\gamma+\frac{1}{\frac b8+\frac13} \right) \right)\right. \\  & + \left.(\sqrt2+1)\left
		(1+\frac18\left( \log\left(\frac{b}{16}\right) +\gamma+\frac{1}{\frac b8+\frac13}  \right)  \right)  \right) .
	\end{align*}
	This is a decreasing function and a computer check in MAPLE shows that it is bounded above by $\frac{\zeta\left( \frac32\right) }{1.14\cdot2\sqrt2} $
	for $ b\ge 124. $ For the remaining cases we use the well-known relation between a Dirichlet $ L $-series and the Hurwitz zeta function (see, e.g., Apostol \cite[Ch. 12]{Apostol}) to write 
	\[\lambda_{a,b}^*=\frac{2}{b^{\frac32}}\sum_{d\left| \frac b2\right.}d^{-\frac12}\varepsilon_d\left( 1-\sqrt2\left(\frac 2d \right)\right) \sum_{\ell=1}^{d}\left( \frac{\ell a }{d}\right)\zeta\left( \frac32,\frac{\ell}{d} \right).  \]
	For $ b\le 124 $ we checked in MAPLE that
	\[\max \{ |  {\rm Re}( \lambda_{a,b}^*)| ,| {\rm Im} ( \lambda_{a,b}^*)|\} <\frac{\zeta\left( \frac32\right) }{1.14\cdot2\sqrt2}.  \]
	\noindent{\sc Case 3:} $ 4\parallel b. $ We have
	\begin{align*}
		\lambda_{a,b}^*&=\Lambda^*_{a,b}+\frac{1}{2\sqrt2}\Lambda^*_{2a,\frac b4}-\Lambda^*_{a,\frac b4}\\&=\frac{1}{b^{\frac32}} \sum_{d|b}d^{\frac12}\sum_{\substack{m\ge1\\(m,d)=1}}\frac{S_{ma,d}}{m^{\frac32}} +\frac{8}{b^{\frac32}}\sum_{d\left| \frac b4\right. }d^{\frac12}\sum_{\substack{m\ge1\\(m,d)=1}}\frac{1}{m^{\frac32}}\left( \frac{S_{2ma,d}}{2\sqrt2}-S_{ma,d} \right)\\&= \frac{1}{b^{\frac32}}\sum_{d\left| \frac b4\right. }d^{\frac12}\sum_{\substack{m\ge1\\(m,d)=1}}\frac{1}{m^{\frac32}}\left( S_{ma,d}+2\sqrt2 S_{2ma,d}-8 S_{ma,d} \right) \\&\phantom{=~}+\frac{1}{b^{\frac32}}\sum_{d\left| \frac b4\right. }(4d)^{\frac12}\sum_{\substack{m\ge1\\(m,2d)=1}}\frac{S_{ma,4d}}{m^{\frac32}}
		\\&= \frac{1}{b^{\frac32}}\sum_{d\left| \frac b4\right. }d^{\frac12}\sum_{\substack{m\ge1\\(m,d)=1}}\frac{\varepsilon_d\sqrt d\left( \frac{ma}{d}\right)\left( -7+2\sqrt2\left(\frac 2d \right) \right)}{m^{\frac32}}\\&\phantom{=~}+\frac{1}{b^{\frac32}}\sum_{d\left| \frac b4\right. }(4d)^{\frac12}\sum_{\substack{m\ge1\\(m,2d)=1}}\frac{(1+i)\varepsilon^{-1}_{ma}2\sqrt d\left(\frac{4d}{ma} \right)}{m^{\frac32}}.
	\end{align*}
	In the same way as before, the real and imaginary parts of $ \lambda_{a,b}^* $ can be bounded (for some $ j=1,3 $ depending on the congruence class of $ \frac b4\pmod*4 $) by
	\begin{align*}
		&\phantom{=~}\frac{\zeta\left( \frac32\right) }{b^{\frac32}} \left( \sum_{\substack{d\left| \frac b4\right.\\d\equiv j\pmod*8 } }d(7+2\sqrt2) + \sum_{\substack{d\left| \frac b4\right.
				\\d\equiv j+4\pmod*8 } }d(7-2\sqrt2)+ 4\left( 1-2^{-\frac32}\right)  \sum_{d\left| \frac b4\right.}d\right) \\
		&=\frac{\zeta\left( \frac32\right) }{b^{\frac32}} \left( \sum_{\substack{d\left| \frac b4\right.\\d\equiv j\pmod*8 } }\frac{b}{4d}(7+2\sqrt2) + \sum_{\substack{d\left| \frac b4\right.
				\\d\equiv j+4\pmod*8 } }\frac{b}{4d}(7-2\sqrt2)+ \left( 1-2^{-\frac32}\right)  \sum_{d\left| \frac b4\right. }\frac{b}{d}\right) ,
	\end{align*}
	which, by using \eqref{boundcase1} in the worst possible case (that is, $ j+4\equiv5\pmod*8 $), is seen to be less than 
	\begin{align*}
		&\phantom{+~}\frac{\zeta\left( \frac32\right) }{4b^{\frac12}} \Bigg( ( 7+2\sqrt2)\left(1+\frac18\left( \log\left(\frac{b}{32}\right) +\gamma+\frac{1}{\frac{b}{16}+\frac13} \right) \right) \\&+ (7-2\sqrt2)\left
		(\frac15+\frac18\left( \log\left(\frac{b}{32}\right) +\gamma+\frac{1}{\frac{b}{16}+\frac13}  \right)  \right)   \\
		& +4\left( 1-2^{-\frac32}\right)\left( 1+ \frac12\left(\log\left( \frac b8\right) +\gamma+\frac{1}{\frac b4+\frac13} \right)\right) \Bigg) .
	\end{align*}
	In turn, a computer check in MAPLE shows that this decreasing function is bounded above by $\frac{\zeta\left( \frac32\right) }{1.14\cdot2\sqrt2} $ for $ b\ge 390. $ 
	For the remaining cases we rewrite
	\begin{align*}
		\lambda_{a,b}^*&=\frac{1}{b^{\frac32}}\sum_{d\left| \frac b4\right.}\left( d^{-\frac12}\varepsilon_d\left( -7+2\sqrt2\left(\frac 2d \right) \right) 
		\sum_{\ell=1}^d\left( \frac{\ell a}{d}\right)\zeta\left(\frac32,\frac{\ell}{d} \right)
		\right. \\&\phantom{=~}+ \left.(4d)^{-\frac12}(1+i)\sum_{\ell=1}^{4d}\varepsilon_{\ell a}^{-1}\left( \frac{4d}{\ell a}\right)\zeta\left(\frac32,\frac{\ell}{4d} \right) \right)  .   
	\end{align*}
	A MAPLE check shows that, for $ b\le 390, $ we have
	\[\max \{ |  {\rm Re}( \lambda_{a,b}^*)| ,| {\rm Im} ( \lambda_{a,b}^*)|\} <\frac{\zeta\left( \frac32\right) }{1.14\cdot2\sqrt2}.  \]
	\noindent{\sc Case 4:} $ 8\mid b. $ We write $ b=2^{\nu}b', $ with $ b'  $ odd. If we define $ \delta_{d,4}=0 $ for $ 4\nmid d $ and $ \delta_{d,4}=1 $ for $ 4\mid d, $ we have 
	\begin{align*}
		\lambda_{a,b}^*&=\Lambda^*_{a,b}+\frac{\Lambda^*_{a,\frac b8}}{2\sqrt2}-\Lambda^*_{a,\frac b4}\\&=\frac{1}{b^{\frac32}} \sum_{d|b}d^{\frac12}\sum_{\substack{m\ge1\\(m,d)=1}}\frac{1}{m^{\frac32}} \left( \varepsilon_d\left(\frac{4ma}{d} \right)\sqrt d+\delta_{d,4}\varepsilon^{-1}_{ma}(1+i)\sqrt d \left( \frac{d}{ma} \right)   \right)   \\&\phantom{=~}+ \frac{1}{\left(\frac b8 \right) ^{\frac32}2\sqrt2}\sum_{d\left| \frac b8\right. }d^{\frac12}\sum_{\substack{m\ge1\\(m,d)=1}} \frac{1}{m^{\frac32}}\left( \varepsilon_d\left(\frac{4ma}{d} \right)\sqrt d+\delta_{d,4}\varepsilon^{-1}_{ma}(1+i)\sqrt d \left( \frac{d}{ma} \right)   \right) \\&\phantom{=~}-\frac{1}{\left(\frac b4 \right) ^{\frac32}}\sum_{d\left| \frac b4\right. }d^{\frac12}\sum_{\substack{m\ge1\\(m,d)=1}} \frac{1}{m^{\frac32}}\left( \varepsilon_d\left(\frac{4ma}{d} \right)\sqrt d+\delta_{d,4}\varepsilon^{-1}_{ma}(1+i)\sqrt d \left( \frac{d}{ma} \right)   \right) \\&=\frac{1}{b^{\frac32}}\sum_{d|b'}d\sum_{\substack{m\ge1\\(m,d)=1}}\frac{\varepsilon_d\left( \frac{ma}{d}\right) }{m^{\frac32}}+\frac{1+i}{b^{\frac32}}\sum_{\substack{d|b'\\2\le j\le \nu-3}}d\cdot 2^j\sum_{\substack{m\ge1\\(m,2d)=1}}\frac{\varepsilon_{ma}^{-1}\left( \frac{2^jd}{ma}\right) }{m^{\frac32}}\\&\phantom{=~}-\frac{7(i+1)}{b^{\frac32}}\sum_{d|b'}d\cdot2^{\nu-2}\sum_{\substack{m\ge1\\(m,2d)=1}}\frac{\varepsilon^{-1}_{ma}\left( \frac{2^{\nu-2}d}{ma}\right) }{m^{\frac32}}\\&\phantom{=~}+\frac{1+i}{b^{\frac32}}\sum_{\substack{d|b'\\ \nu-1\le j\le \nu}}d\cdot 2^j\sum_{\substack{m\ge1\\(m,2d)=1}}\frac{\varepsilon_{ma}^{-1}\left( \frac{2^jd}{ma}\right) }{m^{\frac32}}.
	\end{align*}
	Taking real and imaginary parts gives, for $ \ell,k\in\{1,3
	\} $ depending on the congruence class of $ b'\pmod*4, $ 
	\begin{align*}&\phantom{~=~}\frac{\zeta\left( \frac32\right) }{b^{\frac32}}\sum_{\substack{d|b'\\d\equiv \ell\pmod*4}}d+\frac{\zeta\left( \frac32\right) \left (1-2^{-\frac32}\right) }{b^{\frac32}}\sum_{d|b'}d\left( 3\cdot 2^{\nu-1}+\sum_{2\le j\le \nu} 2^j\right)\\
		&=\frac{\zeta\left( \frac32\right) }{b^{\frac32}}\sum_{\substack{d|b'\\d\equiv k\pmod*4}}\frac b{2^{\nu}d}+\frac{\zeta\left( \frac32\right) \left( 1-2^{-\frac32}\right) }{b^{\frac32}}\sum_{d|b'}\frac{b}{2^{\nu}d}\left( 3\cdot 2^{\nu-1}+\sum_{2\le j\le \nu} 2^j\right) \end{align*}
	as bound for $\max \{ |  {\rm Re}( \lambda_{a,b}^*)| ,| {\rm Im} ( \lambda_{a,b}^*)|\}.$
	The expression inside the brackets from the inner sum equals 
	$7\cdot 2^{\nu-1}-4<7\cdot 2^{\nu-1},$ and thus we obtain  as overall bound, in the worst possible case (that is, $ d\equiv1\pmod*4 $),
	%\[\zeta\left( \frac32\right) \left( \frac{1}{b^{\frac12}2^{\nu}}\sum_{\substack{d|b'\\d\equiv\ell \pmod*4}} \frac1d +\frac{7\left( 1-2^{-\frac32}\right) }{2b^{\frac12}}\sum_{\substack{d|b'\\d\equiv1\pmod*2}}\frac1d\right),\]
	%which,  equals
	\begin{align*}
		&\phantom{~<~}\zeta\left( \frac32\right) \left( \frac{1}{b^{\frac12}2^{\nu}}\sum_{\substack{d|b'\\d\equiv1 \pmod*4}} \frac1d +\frac{7\left( 1-2^{-\frac32}\right) }{2b^{\frac12}}\sum_{\substack{d|b'\\d\equiv1\pmod*2}}\frac1d\right)\\
		& \le\frac{\zeta\left( \frac32\right) }{b^{\frac12}}\left(\frac18\left( 1+\frac14\left(\log\left( \frac{b'}{4}\right) +\gamma+\frac{1}{\frac{b'}{2}+\frac13} \right) \right)\right.  \\&\phantom{<=} \left.  +\frac72\left( 1-2^{-\frac32}\right) \left(1+\frac12\left(\log\left( \frac{b'}{2}\right) +\gamma+\frac{1}{b'+\frac13} \right)  \right)\right) \\
		&\le \frac{\zeta\left( \frac32\right) }{b^{\frac12}}\left(\frac18\left( 1+\frac14\left(\log\left( \frac{b}{32}\right) +\gamma+\frac{1}{\frac{b}{16}+\frac13} \right) \right)\right.  \\&\phantom{<=} \left. +\frac72\left( 1-2^{-\frac32}\right) \left(1+\frac12\left(\log\left( \frac{b}{16}\right) +\gamma+\frac{1}{\frac b8+\frac13} \right) \right)  \right).
	\end{align*}
	A computer check in MAPLE shows that this last expression, which is a decreasing function, is bounded above by $\frac{\zeta\left( \frac32\right) }{1.14\cdot2\sqrt2} $ for $ b\ge 527. $
	For the remaining cases we rewrite
	\begin{align*}
		\lambda_{a,b}^*&=\frac{1}{b^{\frac32}}\sum_{d|b}d\varepsilon_d\sum_{m\ge1}\frac{\left(\frac{4ma}{d} \right) }{m^{\frac32}}+\frac{1}{b^{\frac32}}\sum_{d\left| \frac b4\right. }4d(1+i)\sum_{m\ge1}\frac{\varepsilon_{ma}^{-1}\left(\frac{4d}{ma} \right) }{m^{\frac32}}\\&\phantom{=~}+\frac{8}{b^{\frac32}}\sum_{d\left| \frac {b}{32}\right. }\sum_{m\ge1}4d(1+i)\frac{\varepsilon_{ma}^{-1}\left(\frac{4d}{ma} \right) }{m^{\frac32}}\\
		&\phantom{=~}-\frac{8}{b^{\frac32}}\sum_{d\left| \frac {b}{16}\right. }4d(1+i)\sum_{m\ge1}\frac{\varepsilon_{ma}^{-1}\left(\frac{4d}{ma} \right) }{m^{\frac32}}\\
		&=\frac{1}{b^{\frac32}}\sum_{d|b}d^{-\frac12}\varepsilon_d\sum_{\ell=1}^d\left(\frac{4\ell a}{d} \right)\zeta\left( \frac32,\frac{\ell}{d}\right) +\frac{1+i}{b^{\frac32}}\sum_{d\left| \frac b4\right. }(4d)^{-\frac12}\sum_{\ell=1}^{4d}\varepsilon_{\ell a}^{-1}\left(\frac{4d}{\ell a} \right) \zeta\left( \frac32,\frac{\ell}{4d}\right)\\&\phantom{=~} +\frac{8(i+1)}{b^{\frac32}}\sum_{d\left| \frac {b}{32}\right. }(4d)^{-\frac12}\sum_{\ell=1}^{4d}\varepsilon_{\ell a}^{-1}\left(\frac{4d}{\ell a} \right) \zeta\left( \frac32,\frac{\ell}{4d}\right) \\&\phantom{=~}-\frac{8(i+1)}{b^{\frac32}}\sum_{d\left| \frac {b}{16}\right. }(4d)^{-\frac12}\varepsilon_{\ell a}^{-1}\left(\frac{4d}{\ell a} \right) \zeta\left( \frac32,\frac{\ell}{4d}\right)
	\end{align*}
	and check that \[\max \{ |  {\rm Re}( \lambda_{a,b}^*)| ,| {\rm Im} ( \lambda_{a,b}^*)|\} <\frac{\zeta\left( \frac32\right) }{1.14\cdot2\sqrt2}.  \]
	This finishes the proof of the lemma.	
\end{proof}
\begin{Lem}
	\label{biglemma}
	If $0\le a<b$ are coprime integers with $ b\ge2, $ for some $ c>0 $ we have  
	\[\frac{\lambda_{0,1}}{\sqrt y}-{\rm{Re}}\left( \frac{\lambda_{a,b}}{\sqrt{\tau'}} \right)\ge \frac{c}{\sqrt{y}}.  \]
\end{Lem}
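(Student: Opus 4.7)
The plan is to decompose both $\lambda_{a,b}$ and $1/\sqrt{\tau'}$ into real and imaginary parts, bound ${\rm Re}(\lambda_{a,b}/\sqrt{\tau'})$ by a product of two max-quantities, and then combine the estimate of Lemma~\ref{boundmaxReIm} with a sharp single-variable calculus maximization. A naive Cauchy--Schwarz step is visibly too weak: it would only give ${\rm Re}(\lambda_{a,b}/\sqrt{\tau'}) \le |\lambda_{a,b}|/\sqrt{|\tau'|} \le (\sqrt{2}/1.14)\cdot \lambda_{0,1}/\sqrt y$, with leading constant $>1$, so a finer argument is needed.

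First I would write $1/\sqrt{\tau'}=p+iq$ using the principal branch and the polar form $\tau'=|\tau'|e^{i\theta}$. The half-angle identities applied to $\cos\theta = y/|\tau'|$ yield
\[p^{2}=\frac{|\tau'|+y}{2|\tau'|^{2}}, \qquad q^{2}=\frac{|\tau'|-y}{2|\tau'|^{2}},\]
and a direct expansion (using $\sqrt{|\tau'|^{2}-y^{2}}=|{\rm Im}(\tau')|$) gives
\[(p+|q|)^{2}=\frac{|\tau'|+|{\rm Im}(\tau')|}{|\tau'|^{2}}=\frac{\sqrt{y^{2}+s^{2}}+s}{y^{2}+s^{2}},\qquad s:=2\pi|x'|.\]
A one-variable maximization over $s\ge0$ (the critical equation $y^{2}-s^{2}=s\sqrt{y^{2}+s^{2}}$ yields $s=y/\sqrt3$) shows that the right-hand side attains its maximum $3\sqrt3/(4y)$ uniformly in $\tau'$, so $p+|q|\le 3^{3/4}/(2\sqrt y)$.

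Second, writing $\lambda_{a,b}=u+iv$ with $u,v\in\bb R$ and using that $p>0$, one has
\[{\rm Re}\!\left(\frac{\lambda_{a,b}}{\sqrt{\tau'}}\right)=up-vq\le |u|p+|v||q|\le \max\{|u|,|v|\}(p+|q|),\]
and Lemma~\ref{boundmaxReIm}, together with $\lambda_{0,1}=\Gamma(3/2)\zeta(3/2)/(2\sqrt2)$, bounds $\max\{|u|,|v|\}$ by $\lambda_{0,1}/1.14$. Combining these two estimates yields
\[{\rm Re}\!\left(\frac{\lambda_{a,b}}{\sqrt{\tau'}}\right)<\frac{3^{3/4}}{2\cdot1.14}\cdot\frac{\lambda_{0,1}}{\sqrt y}.\]

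The conclusion now follows from the strict numerical inequality $3^{3/4}<2.28=2\cdot1.14$ (equivalent to $27<2.28^{4}=27.023\ldots$): one sets $c:=\lambda_{0,1}(1-3^{3/4}/2.28)>0$, a positive absolute constant independent of $a,b,y$, and rearranges. The main obstacle is really that this final numerical margin is very thin (about $2\times10^{-4}$), which is already absorbed into Lemma~\ref{boundmaxReIm}: the denominator $1.14$ there was engineered precisely to be strictly larger than the critical value $3^{3/4}/2=1.1397\ldots$ emerging from the calculus step above, which explains the extensive Gauss-sum and divisor-sum case analysis carried out in that earlier lemma.
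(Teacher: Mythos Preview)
Your proof is correct and follows essentially the same approach as the paper: both decompose $1/\sqrt{\tau'}$ into real and imaginary parts via half-angle formulas, bound ${\rm Re}(\lambda_{a,b}/\sqrt{\tau'})$ by $\max\{|{\rm Re}(\lambda_{a,b})|,|{\rm Im}(\lambda_{a,b})|\}\cdot(p+|q|)$, and then maximize $p+|q|$ (the paper parameterizes by $t=-2\pi x'/y$ rather than $s=2\pi|x'|=y|t|$, but the optimum $t=1/\sqrt3$, $s=y/\sqrt3$ and the critical value $3^{3/4}/2<1.14$ are the same). Your closing remark explaining why the constant $1.14$ in Lemma~\ref{boundmaxReIm} was chosen just above $3^{3/4}/2$ is exactly the point.
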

\begin{proof}
	We write $ \tau'=y+ity  $ for some $ t\in\bb R. $ We have
	\begin{align*}
		{\rm{Re}}\left( \frac{\lambda_{a,b}}{\sqrt{\tau'}} \right)&=\frac{1}{\sqrt y}{\rm Re}\left(\frac{\lambda_{a,b}}{\sqrt{1+it }} \right)=\frac{1}{\sqrt y}{\rm Re}\left( \frac{\lambda_{a,b}}{(1+t^2)^{\frac14}e^{\frac i2\arctan t}}\right)\\
		&= \frac{1}{\sqrt y(1+t^2)^{\frac14}}\left( \cos\left(\frac{\arctan t}{2}\right) {\rm Re}\left( \lambda_{a,b}\right)+\sin  \left(\frac{\arctan t}{2}\right){\rm Im} \left( \lambda_{a,b}\right)\right). 
	\end{align*}
	%Assuming that we can bound \begin{equation}
	%\label{boundReIm}
	%| {\rm Re}\left( \lambda_{a,b}\right)|,|{\rm Im} \left( \lambda_{a,b}\right)|<D, 
	%\end{equation}
	%we would have 
	%\[ {\rm{Re}}\left( \frac{\lambda_{a,b}}{\sqrt{\tau'}} \right)<\frac{D}{\sqrt y(1+t^2)^{\frac14}}\left( \left| \cos\left(\frac{\arctan t}{2}\right)\right|+ \left| \sin\left(\frac{\arctan t}{2}\right)\right| \right) .\]
	%We next show that this is bounded by \[\frac{1.14\cdot D}{\sqrt{y}(1+c^2)^{\frac14}}.\]
	We aim to find the maximal absolute value of 
	\[f(t)=\frac{1}{(1+t^2)^{\frac14}}\left( \left| \cos\left(\frac{\arctan t}{2}\right)\right|  +\left| \sin  \left(\frac{\arctan t}{2}\right)\right| \right). \]
	Using the trigonometric identities
	\begin{equation*}
		\cos\left(\frac{\Theta}{2} \right) =\sqrt{\frac{1+\cos\Theta}{2}},\quad
		\sin\left(\frac{\Theta}{2} \right) =\sqrt{\frac{1-\cos\Theta}{2}},\quad
		\cos(\arctan t)=\frac{1}{\sqrt{1+t^2}}, 
	\end{equation*}
	as well as the fact that $ |\arctan t|<\frac{\pi}{2}, $ we obtain
	\[f(t)=\frac{1}{\sqrt2}\left( \sqrt{\frac{1}{\sqrt{1+t^2}}+\frac{1}{1+t^2}}+\sqrt{\frac{1}{\sqrt{1+t^2}}-\frac{1}{1+t^2}} \right) ,\]
	%Set $ x= \frac{1}{\sqrt{1+t^2}}$. Then \[f(t)=\frac{1}{\sqrt2}\left( \sqrt{x+x^2}+\sqrt{x-x^2}\right) \]
	%and \[2f^2(t)=2x(1+\sqrt{1-x^2}).\]
	%Taking the derivative with respect to $ x $ and dividing by 2 we obtain
	%\begin{align*}
	%1+\sqrt{1-x^2}+\frac{x(-2x)}{\sqrt{1-x^2}}=0&\Leftrightarrow\sqrt{1-x^2}+1-x^2-x^2=0
	%%&\Leftrightarrow 1-x^2=1=4x^2+4x^4\Leftrightarrow3=4x^2\\
	%\Leftrightarrow x^2=\frac34\Leftrightarrow t^2=\frac13\Leftrightarrow t=\pm\frac{1}{\sqrt3}.
	%\end{align*}
	and an easy calculus exercise shows that the maximum value of $ f $ occurs for $ t=\pm\frac1{\sqrt3} $ and equals \[f\left( \pm \frac1{\sqrt3}\right) =\frac{3^{\frac34}}{2} =1.13975\ldots<1.14.\]
	%as the maximal value of $ f. $\\
	On noting that $\lambda_{0,1}=\frac{\Lambda_{0,1}}{2\sqrt2}=\frac{\Gamma\left( \frac32\right) \zeta\left( \frac32\right) }{2\sqrt{2}}$ and that by Lemma \ref{boundmaxReIm} there exists a small enough $ c>0 $ such that 
	%\begin{equation*}\label{boundRelambda}
	%{\rm{Re}}\left( \frac{\lambda_{a,b}}{\sqrt{\tau'}} \right)<\frac{\zeta\left( \frac32\right)\Gamma\left( \frac32\right) }{2\sqrt2\sqrt{y}}=\frac{\lambda_{0,1}}{\sqrt y},
	%\end{equation*}
	%and also, by the proof of Lemma \ref{boundmaxReIm}, we see that there exists a small enough $ c>0 $ such that 
	\[{\rm{Re}}\left( \frac{\lambda_{a,b}}{\sqrt{\tau'}} \right)\le\frac{\lambda_{0,1}-c}{\sqrt y},\]
	we conclude the proof.  
\end{proof}
\begin{proof}[Proof of Lemma \ref{boundGbigoh}] If we are on a minor arc, then it suffices to apply Lemma \ref{minors} (because, as $ y\to 0, $ a negative power of $ y $ will dominate any positive power of $ y $), so let us assume that we are on a major arc.\\
	We first consider the behavior near 0, which corresponds to $ a=0, $ $ b=1,$ $ \tau=\tau'=y-2\pi ix. $ 
	%and $ \mathcal F_{0,1}=\left[ 0,\frac{1}{y^{-\frac23}+1}\right].  $ 
	Writing $ y^{\beta}=y^{\frac54-\varepsilon} $ with $ \varepsilon>0 $ (here we use the second inequality from \eqref{ineqbeta}), we have, on setting $ b=1 $ in \eqref{ineqxprim}, 
	\begin{equation}
		\label{cusp0}
		y^{\frac54-\varepsilon}\le |x|=|x'|\le y^{\frac23}.
	\end{equation}
	By \eqref{formforG} we get
	\[G(q)=Ce^{\frac{\Lambda_{0,1}}{2\sqrt2\sqrt{\tau}}}\frac{P_{0,1}(\tau)P_{0,1}(8\tau)}{P_{0,1}(4\tau )^2}  \]
	for some $ C>0 $ and thus, by Lemma \ref{majorarcs},
	\[\log|G(q)|=\frac{\Lambda_{0,1}}{2\sqrt2\sqrt{|\tau|}}+O(1).\]
	On using \eqref{cusp0} to prove the first inequality below and expanding into Taylor series to prove the second one, we obtain, by letting $ y\to0, $
	\[\frac{1}{\sqrt{|\tau|}}=\frac{1}{\sqrt{y}}\frac{1}{\left( 1+\frac{4\pi^2x^2}{y^2}\right)^{\frac14} }\le \frac{1}{\sqrt{y}}\frac{1}{\left( 1+4\pi^2y^{\frac12-2\varepsilon}\right) ^{\frac14}}\le \frac{1}{\sqrt{y}}\left( 1-c_6y^{\frac12-2\varepsilon}\right) \] for some $ c_6>0, $
	and this concludes the proof in this case. 
	%on choosing $ c=c_8 $.
	
	To finish the claim we assume $ 2\le b\le y^{-\frac13}. $ If $ \tau\in\mathfrak M_{a,b}, $ then by \eqref{formforG} and Lemma \ref{majorarcs} we obtain 
	\[\log|G(q)|={\rm Re} \left( \frac{\lambda_{a,b}}{\sqrt{\tau'}} \right)+O\left( y^{-\frac13}\right)    \]
	as $ y\to0. $ Since by Lemma \ref{biglemma} there exists $ c_7>0 $ such that 
	\begin{equation}\label{ND}
		{\rm Re} \left( \frac{\lambda_{a,b}}{\sqrt{\tau'}} \right)\le \frac{\lambda_{0,1}}{\sqrt y}-\frac{c_7}{\sqrt{y}},
	\end{equation}
	we infer from \eqref{ND} that, as $ y\to 0, $  we have
	\[\log|G(q)|\le \frac{\lambda_{0,1}}{\sqrt y}-\frac{c_8}{\sqrt{y}}  \]
	for some $ c_8>0 $
	and the proof is complete. \end{proof}

\section{Proof of the Main Theorem}\label{Proof of Theorem 2} 
We have now all necessary ingredients to prove Theorem \ref{Conj1}, whose statement we repeat for convenience.
\begin{Thm}~
	\begin{enumerate}[{\rm (i)}] 	
		\item[\textup{(i)}] As $ n\to\infty, $ we have \[p_2(0,2,n)\sim p_2(1,2,n).\] 
		\item[\textup{(ii)}] Furthermore, for $ n $ sufficiently large, we have
		\[\begin{cases}
		p_2(0,2,n)>p_2(1,2,n) & \text{if~$ n $ is even,}\\
		p_2(0,2,n)<p_2(1,2,n) & \text{if~$ n $ is odd.}
		\end{cases}\]\end{enumerate}
\end{Thm}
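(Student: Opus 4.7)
The plan is to prove part (ii) first and then deduce part (i). By the reformulation in Section \ref{reformulation}, it suffices to show that the coefficients $a_2(n)$ of $G(q)$ are positive for all sufficiently large $n$. I would apply the saddle-point method to the main integral in \eqref{epxressionfora2}, using Lemma \ref{smallxmaintermG} on the short arc $|x|\le y^\beta$ (on which $|\Arg(\tau)|\le\pi/4$ holds automatically since $\beta>1$), and dispose of the remainder $R(n)$ via Lemma \ref{boundGbigoh}.

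Concretely, Lemma \ref{smallxmaintermG} lets me replace $G(e^{-\tau})$ by $\tfrac{1}{\sqrt 2}\exp(\lambda_{0,1}/\sqrt\tau)$ up to a multiplicative $(1+O(y^{c_0}))$ factor, where $\lambda_{0,1}=\sqrt\pi\,\zeta(3/2)/(4\sqrt 2)$. A Taylor expansion of $\tau^{-1/2}$ in $x$ about $0$ gives
\[\frac{\lambda_{0,1}}{\sqrt\tau} \;=\; \frac{\lambda_{0,1}}{\sqrt y} + \frac{\lambda_{0,1}\pi i}{y^{3/2}}\,x - \frac{3\lambda_{0,1}\pi^2}{2y^{5/2}}\,x^2 + O\!\left(\frac{|x|^3}{y^{7/2}}\right),\]
and the choice \eqref{choicey} of $y$ is precisely the one that cancels the linear coefficient against the phase $-2\pi i n x$. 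The cubic remainder is uniformly $O(y^{3\beta-7/2})=o(1)$ on $|x|\le y^\beta$ by the first inequality in \eqref{ineqbeta}. Substituting $x=t\,y^{5/4}$ converts the main integral into a Gaussian truncated at $|t|\le y^{\beta-5/4}\to\infty$ (by the second inequality in \eqref{ineqbeta}), whose tails decay super-polynomially. Completing the Gaussian yields
\[a_2(n)\;\sim\;C_0\,n^{-5/6}\exp\!\left(\tfrac{3}{2^{2/3}}\,\lambda_{0,1}^{2/3}\,n^{1/3}\right), \qquad C_0>0.\]
For the remainder, Lemma \ref{boundGbigoh} gives $|R(n)|\ll e^{ny+\lambda_{0,1}/\sqrt y-cy^{-\varepsilon}}$, a factor $e^{-cy^{-\varepsilon}}$ smaller than the main term and therefore super-polynomially negligible since $y=\Theta(n^{-2/3})$. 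Thus $a_2(n)>0$ for $n$ large, proving (ii).

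For part (i), I would invoke Wright's asymptotic for $p_2(n)$ (equivalently, Theorem \ref{Meinardus} applied with $D(s)=\zeta(2s)$, which has a simple pole of residue $\tfrac{1}{2}$ at $\alpha=\tfrac{1}{2}$). This produces an exponential growth rate $3(\sqrt\pi\,\zeta(3/2)/4)^{2/3}\,n^{1/3}$, which is exactly twice the exponential rate $\tfrac{3}{2^{2/3}}\lambda_{0,1}^{2/3}\,n^{1/3}$ obtained above for $a_2(n)$ (since $\sqrt\pi\,\zeta(3/2)/4=\sqrt 2\,\lambda_{0,1}$). Hence $a_2(n)=o(p_2(n))$, and combining with the identities $p_2(0,2,n)+p_2(1,2,n)=p_2(n)$ and $p_2(0,2,n)-p_2(1,2,n)=(-1)^n a_2(n)$ yields $p_2(0,2,n)\sim p_2(1,2,n)\sim p_2(n)/2$.

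The main obstacle is the saddle-point evaluation: one must verify that the cubic Taylor error, the truncation of the Gaussian to $|t|\le y^{\beta-5/4}$, and the multiplicative $O(y^{c_0})$ factor from Lemma \ref{smallxmaintermG} all combine into a single $(1+o(1))$ factor multiplying the leading exponential. The narrow window $\beta\in(7/6,5/4)$ is calibrated precisely so that $\beta>7/6$ suppresses the cubic error while $\beta<5/4$ makes the Gaussian tails negligible; the uniform major/minor-arc bound of Lemma \ref{boundGbigoh} (the technical heart of Section 2) then ensures that contributions from $|x|>y^\beta$ do not interfere.
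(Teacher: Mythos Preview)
Your proposal is correct and follows essentially the same route as the paper's proof: saddle-point evaluation of the short-arc integral via Lemma~\ref{smallxmaintermG} (with the choice \eqref{choicey} of $y$ cancelling the linear phase, the window $\beta\in(7/6,5/4)$ controlling the cubic error and Gaussian tails), disposal of $R(n)$ via Lemma~\ref{boundGbigoh}, and comparison of the resulting exponential rate for $a_2(n)$ against Wright's asymptotic for $p_2(n)$ to deduce part~(i). The only cosmetic difference is your direct Taylor expansion in $x$ and substitution $x=t\,y^{5/4}$ in place of the paper's two-step change of variables $u=-2\pi x/y$ followed by a rescaling; the computations are equivalent and yield the same asymptotic $a_2(n)\sim C_0\,n^{-5/6}\exp\!\big(\tfrac{3}{2^{2/3}}\lambda_{0,1}^{2/3}n^{1/3}\big)$.
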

\begin{proof} We begin by proving part (ii).
	By Lemma \ref{boundGbigoh} and the fact that $ \Lambda_{0,1}=\Gamma\left( \frac32\right)\zeta\left( \frac32\right),  $ we have 
	%\begin{align*}
	%R_1&=e^{ny} \int_{y^{\beta}\le|x|\le\frac12}  G(e^{-y+2\pi ix})e^{-2\pi inx}dx\le e^{ny} \int_{y^{\beta}\le|x|\le\frac12} e^{\frac{1}{2\sqrt2}\Gamma\left( \frac12\right) \zeta\left( \frac32\right)\frac{1}{\sqrt{y}}-cy^{-\varepsilon}} dx\\&\le e^{ny+\frac{1}{\sqrt2}\Gamma\left( \frac32\right) \zeta\left( \frac32\right) \frac{1}{\sqrt y}-cy^{-\varepsilon}}=e^{3n^{\frac13}\left( \frac{1}{2\sqrt2}\Gamma\left( \frac32\right) \zeta\left( \frac32\right) \right)^{\frac23} -cn^{\varepsilon}}.
	%\end{align*}
	%\Alex: I believe we actually have, since $ \Lambda_{0,1}=\Gamma\left( \frac32\right)\zeta\left( \frac32\right),  $ by Lemma \eqref{boundGbigoh}, that
	\begin{align}\label{correctbounderror}
		R(n)&=e^{ny} \int_{y^{\beta}\le|x|\le\frac12}  G\left( e^{-y+2\pi ix}\right) e^{-2\pi inx}dx\nonumber\\&\ll e^{ny} \int_{y^{\beta}\le|x|\le\frac12} e^{\frac{1}{2\sqrt2}\Gamma\left( \frac{3}{2}\right) \zeta\left( \frac32\right)\frac{1}{\sqrt{y}}-cy^{-\varepsilon}} dx\nonumber\\&\le e^{ny+\frac{1}{2\sqrt2}\Gamma\left( \frac32\right) \zeta\left( \frac32\right) \frac{1}{\sqrt y}-cy^{-\varepsilon}}=e^{3n^{\frac13}\left( \frac{1}{4\sqrt2}\Gamma\left( \frac32\right) \zeta\left( \frac32\right) \right)^{\frac23} -Cn^{\varepsilon_1}},
	\end{align}
	with $ \varepsilon_1=\frac{2\varepsilon}{3}>0 $ and some $ C>0. $
	%which is the same as above! So I think the bound for $ R_1 $ and the above computations are correct (with the choice \eqref{choicey} for $ y $). 
	\par We next turn to the asymptotic main term integral. Let $ n\ge  n_1 $ be large enough so that $y^{\beta-1}\le\frac{1}{2\pi}. $ This choice allows us to apply Lemma \ref{smallxmaintermG}, as it ensures $ |x|\le\frac12 $ and $ |\Arg (\tau)|\le \frac{\pi}{4}. $  Recalling that $ \Gamma \left( \frac32\right)  =\frac{\sqrt{\pi}}2,$ we obtain
	\begin{equation}\label{correctmainterm}
		e^{ny} \int_{-y^{\beta}}^{y^{\beta}}  G(e^{-y+2\pi ix})e^{-2\pi inx}dx= \frac{e^{ny}}{\sqrt2} \int_{-y^{\beta}}^{y^{\beta}}  e^{\frac1{4\sqrt2}\Gamma\left( \frac12\right)\zeta\left(\frac32 \right)\frac{1}{\sqrt{\tau}} +O(y^{\varepsilon})-2\pi inx }dx. \end{equation}
	%\Alex: [In \cite[p.~16]{BM} this was
	%\[e^{ny} \int_{|x|\le y^{\beta}}  G(e^{-y+2\pi ix})e^{-2\pi inx}dx\le \frac{e^{ny}}{\sqrt2} \int_{|x|\le y^{\beta}}  e^{\frac1{2\sqrt2}\Gamma\left( \frac12\right)\zeta\left(\frac32 \right)\frac{1}{\sqrt{\tau}} +O(y^{\varepsilon})-2\pi inx }dx. \]
	%However, \eqref{correctmainterm} should be the correct estimate, as we shall also see later when we introduce $ B. $]\\
	Splitting 
	\[\frac{1}{\sqrt{\tau}}=\frac{1}{\sqrt{y}}+\left( \frac{1}{\sqrt{\tau}}-\frac{1}{\sqrt{y}}\right), \]
	we can rewrite \eqref{correctmainterm} as 
	\begin{align*}
		&\phantom{=~}e^{ny} \int_{-y^{\beta}}^{y^{\beta}}  G(e^{-y+2\pi ix})e^{-2\pi inx}dx\\&= \frac{e^{ny}}{\sqrt{2}}\int_{-y^{\beta}}^{y^{\beta}} e^{\frac{1}{4\sqrt2}\Gamma\left( \frac12\right) \zeta\left( \frac32\right)\frac{1}{\sqrt y}} e^{\frac{1}{4\sqrt2}\Gamma\left( \frac12\right) \zeta\left( \frac32\right)\left( \frac{1}{\sqrt{\tau}}-\frac1{\sqrt y}\right)  }e^{-2\pi inx+O(y^{c_0})}dx\\
		&= \frac{1}{\sqrt{2}}\int_{-y^{\beta}}^{y^{\beta}}\left( e^{ny+\frac{1}{4\sqrt2}\Gamma\left( \frac12\right) \zeta\left( \frac32\right)\frac{1}{\sqrt y}}\right) e^{\frac{1}{4\sqrt2}\Gamma\left( \frac12\right) \zeta\left( \frac32\right)\left( \frac{1}{\sqrt{\tau}}-\frac1{\sqrt y}\right)  }e^{-2\pi inx+O(y^{c_0})}dx\\
		&=\frac{e^{3n^{\frac13}\left( \frac{1}{4\sqrt2}\Gamma\left( \frac32\right) \zeta\left( \frac32\right) \right)^{\frac23} }}{\sqrt{2}}\int_{-y^{\beta}}^{y^{\beta}}e^{\frac{1}{2\sqrt2}\Gamma\left( \frac32\right) \zeta\left( \frac32\right)\frac{1}{\sqrt y}\left( \frac{1}{\sqrt{1-\frac{2\pi ix}{y}}}-1\right)  }e^{-2\pi inx+O(y^{c_0})}dx.
	\end{align*} 
	Putting $ u=-\frac{2\pi x}{y}, $ we get 
	\begin{align}\label{integralfinal}
		&\phantom{=~}e^{ny} \int_{-y^{\beta}}^{y^{\beta}}  G(e^{-y+2\pi ix})e^{-2\pi inx}dx\nonumber\\&=\frac{ye^{3n^{\frac13}\left( \frac{1}{4\sqrt2}\Gamma\left( \frac32\right) \zeta\left( \frac32\right) \right)^{\frac23} }}{2\sqrt2\pi}\int_{- 2\pi y^{\beta-1}}^{2\pi y^{\beta-1}}e^{\frac{1}{2\sqrt2}\Gamma\left( \frac{3}{2}\right) \zeta\left( \frac32\right)\frac{1}{\sqrt y}\left( \frac{1}{\sqrt{1+iu}}-1\right) +inuy +O(y^{c_0} ) }du.
	\end{align}
	Set $ B=\frac{1}{2\sqrt2}\Gamma\left( \frac32\right) \zeta\left( \frac32\right). $ 
	%\par \Alex: Note that for the remainder of this section, I have rewritten everything from the corresponding part of \cite[p. 17]{BM}. I added in red some corrected parameters (please compare with \cite[p.~17]{BM}) and I have explained in detail certain steps which were merely sketched in there.\\
	%With the above choices of $ y $ and $ B $ we have \[. \]
	We have the Taylor series expansion
	\[
	\frac{1}{\sqrt{1+iu}}=1-\frac{iu}{2}-\frac{3u^2}{8}+\frac{5iu^3}{16}+\cdots=1-\frac{iu}{2}-\frac{3u^2}{8}+O(|u|^3), \]
	thus \[B\frac{1}{\sqrt y}\left( \frac{1}{\sqrt{1+iu}}-1\right)+inuy=-\frac{Biu}{2\sqrt y}+inuy-\frac{3Bu^2}{8\sqrt y}+O\left(\frac{|u|^3}{\sqrt y} \right). \]
	An easy computation shows that for $ y $ chosen as in \eqref{choicey} we have $B=2ny^{\frac32},$
	hence \[-\frac{Biu}{2\sqrt y}+inuy=0,  \]
	and, using \eqref{choicey} and the fact that $ |u|\le2\pi y^{\beta-1}, $ we obtain
	\begin{equation*}
		\label{correctBtaylor}
		B\frac{1}{\sqrt y}\left( \frac{1}{\sqrt{1+iu}}-1\right)+inuy=-\frac{3Bu^2}{8\sqrt y}+O\left(\frac{|u|^3}{\sqrt y} \right)=-\frac{3Bu^2}{8\sqrt y}+O\left(n^{\frac13\left(1+\frac{3(1-\beta)}{\alpha} \right) }\right).
	\end{equation*}
	%Then 
	%\[B\frac{1}{\sqrt y}\left( \frac{1}{\sqrt{1+iu}}-1\right)+inuy \stackrel{\text{Taylor exp.}}{=}O\left( \frac{u^2}{\sqrt y}\right). \]
	Thus, if $ C_1= 2\pi\left( \frac{B}{2n}\right)^{\frac23(\beta-1)},$ we may change the integral from the right-hand side of \eqref{integralfinal} into
	\begin{align*}
		&\phantom{=~}\int_{|u|\le 2\pi y^{\beta-1}}e^{B\frac{1}{\sqrt y}\left( \frac{1}{\sqrt{1+iu}}-1\right) +inuy +O(y^{c_0} ) }du
		\\&=\int_{|u|\le C_1 }e^{-\frac{3Bu^2}{8\sqrt y}}  e^{O\left( y^{c_0}+\frac{u^3}{\sqrt y}\right) }   du\\
		&=\int_{|u|\le C_1  }e^{-\frac{3Bu^2}{8\sqrt y}}  e^{O\left( n^{-\frac{2c_0}3}+ n^{\frac13+2(1-\beta)}\right) }   du\\
		%&=\int_{|u|\le C_1  }e^{-\frac{3\sqrt[3]{2n}\sqrt[3]{B^2}u^2}{8}}\cdot  e^{O\left( n^{-c_0}+u^3n^{\frac13}\right) }   du\\
		&=\int_{|u|\le C_1 }e^{-\frac{3\sqrt[3]{2n}\sqrt[3]{B^2}u^2}{8}}\bigg( 1+\bigg( e^{O\big( n^{-\frac{2c_0}3}+n^{\frac13+2(1-\beta)}\big) }-1\bigg) \bigg)    du.
	\end{align*}
	From \eqref{defbeta}, or equivalently, from the first inequality in \eqref{ineqbeta}, we now infer that $$ \frac13+2(1-\beta)=-\frac16+\frac{\delta}4<0 $$ and thus  
	%%This is (finally!) where we use the fact that $ \beta>\frac76, $ see \eqref{ineqbeta}. We obtain this because $ |u| $ has to be between the limits of the integrand above, thus 
	%\[u^3n^{\frac13}\le c_9\frac{n^{\frac13}}{n^{2(\beta-1)}}=c_9\frac{n^{\frac13}}{n^{\frac13+\epsilon}},\]
	%for some $ c_9>0, $ and thus 
	\[e^{O\big( n^{-\frac{2c_0}3}+n^{\frac13+2(1-\beta)}\big) }-1=e^{O\big( n^{-\frac{2c_0}3}+n^{-\frac16+\frac{\delta}4}\big) }-1=O\left(n^{-\kappa} \right),  \]
	where $ \kappa=\min\left\lbrace \frac{2c_0}3,\frac16-\frac{\delta}4\right\rbrace . $ We further get
	\begin{equation*}
		\int_{|u|\le 2\pi y^{\beta-1}}e^{B\frac{1}{\sqrt y}\left( \frac{1}{\sqrt{1+iu}}-1\right) +inuy +O(y^{c_0} ) }du
		=\int_{|u|\le C_1  }e^{-\frac{3\sqrt[3]{2n}\sqrt[3]{B^2}u^2}{8}}\left(  1+ O\left( n^{-\kappa}\right) \right)  du
	\end{equation*}
	and, on using \eqref{choicey} again and setting $ v=\frac{\sqrt3\sqrt[6]{2n}\sqrt[3]Bu}{2\sqrt2} $ and $ C_2=2^{\frac13-\frac23\beta}\sqrt3\pi B^{\frac{2}{3}\beta-\frac13}>0, $ we obtain  
	\begin{align}\label{Gauss}
		&\phantom{=~}\int_{|u|\le 2\pi y^{\beta-1}}e^{B\frac{1}{\sqrt y}\left( \frac{1}{\sqrt{1+iu}}-1\right) +inuy +O(y^{c_0} ) }du\nonumber\\&=\int_{|u|\le C_1  }e^{-\frac{3\sqrt[3]{2n}\sqrt[3]{B^2}u^2}{8}}\left(  1+ O\left( n^{-\kappa}\right) \right)  du\nonumber\\
		&=\frac{2\sqrt2}{\sqrt3 \sqrt[6]{2n}\sqrt[3]B}\int_{|v|\le C_2\cdot n^{\frac{\delta}{12}}} e^{-v^2}\left(  1+ O\left( n^{-\kappa}\right) \right)dv.
	\end{align}
	%where $ C=2^{-\frac23\beta}\sqrt3\pi B^{\frac{2}{3}\beta-\frac13}n^{\frac56-\frac23\beta}. $\\
	By letting $ n\to\infty, $ we turn the integral from \eqref{Gauss} into a Gauss~integral. This introduces an exponentially small error and yields 
	\[\int_{|u|\le 2\pi y^{\beta-1}}e^{B\frac{1}{\sqrt y}\left( \frac{1}{\sqrt{1+iu}}-1\right) +inuy +O(y^{c_0} ) }du=\frac{2\sqrt2}{\sqrt3 \sqrt[6]{2n}\sqrt[3]B}\cdot \sqrt{\pi}\left(  1+ O\left( n^{-\kappa_1}\right) \right),\]
	where $ \kappa_1=\min\left\lbrace\frac{2c_0}{3}-\frac{\delta}{12},\frac16-\frac{\delta}3 \right\rbrace . $ Putting together \eqref{epxressionfora2}, \eqref{correctbounderror} and \eqref{integralfinal}, we obtain that, as $ n\to\infty, $
	%\[cn^{-\frac16}e^{3n^{\frac13}\left( \frac{1}{4\sqrt2}\Gamma\left( \frac32\right)\zeta\left( \frac32\right)   \right)^{\frac23} }\] to be 
	the main asymptotic contribution for our coefficients $ a_2(n) $ is given by
	\begin{align}\label{p2diff}
		a_2(n)&\sim \frac{y}{2\sqrt2\pi}\cdot e^{3n^{\frac13}\left( \frac{1}{4\sqrt2}\Gamma\left( \frac32\right)\zeta\left( \frac32\right)   \right)^{\frac23} }\cdot\frac{2\sqrt2}{\sqrt3\sqrt[6]{2n} \sqrt[3]B}\int_{-\infty}^{\infty} e^{-v^2}dv\nonumber\\
		%&\sim \frac{y}{2\sqrt2\pi}\cdot\frac{2\sqrt2\sqrt[3]{2}}{\sqrt3 B^{\frac13}n^{\frac16}}e^{3n^{\frac13}\left( \frac{1}{4\sqrt2}\Gamma\left( \frac32\right)\zeta\left( \frac32\right)   \right)^{\frac23} }\nonumber\\
		&=\frac{y}{2\sqrt2\pi}\cdot e^{3n^{\frac13}\left( \frac{1}{4\sqrt2}\Gamma\left( \frac32\right)\zeta\left( \frac32\right)   \right)^{\frac23} }\cdot\frac{2\sqrt2\cdot\sqrt{\pi}}{\sqrt3\sqrt[6]{2n} \sqrt[3]B}\nonumber\\
		&=\frac{\sqrt[3]B}{\sqrt{3\pi}\cdot (2n)^{\frac56} }e^{3n^{\frac13}\left( \frac{1}{4\sqrt2}\Gamma\left( \frac32\right)\zeta\left( \frac32\right)   \right)^{\frac23} }.
		%&=A_0 n^{-\frac56} e^{3n^{\frac13}\left( \frac{1}{4\sqrt2}\Gamma\left( \frac32\right)\zeta\left( \frac32\right)   \right)^{\frac23} }.
	\end{align}
	This shows that $ a_2(n)>0 $ as $ n\to\infty $, hence part (ii) of Theorem \ref{Conj1} is proven.
	
	\par We now turn to  part (i). Clearly, $ p_2(n)=p_2(0,2,n)+p_2(1,2,n). $   By applying either Meinardus' Theorem (Theorem \ref{Meinardus}) or Wright's Theorem (\cite[Theorem 2]{WrightIII}) we have, on keeping the notation from \cite[pp. 144--145]{WrightIII},
	\begin{equation*}
		p_2(n)\sim B_0n^{-\frac76}e^{\Lambda n^{\frac13}},
	\end{equation*}
	where 
	\[\quad B_0=\frac{\Lambda}{2\cdot(3\pi)^{\frac32}}\quad\text{and}\quad \Lambda=3\left(\frac{\Gamma\left(\frac32 \right)\zeta\left( \frac32\right)}{2}   \right)^{\frac23}=6\left(\frac{1}{4\sqrt2} \Gamma\left(\frac32 \right)\zeta\left( \frac32\right)  \right)^{\frac23}. \]
	%Noting that \[\Lambda= 3\left(\frac{\Gamma\left(\frac32 \right)\zeta\left( \frac32\right)}{2}   \right)^{\frac23}=6\left(\frac{1}{4\sqrt2} \Gamma\left(\frac32 \right)\zeta\left( \frac32\right)  \right)^{\frac23} ,\]
	We thus obtain 
	\begin{equation}\label{p2wright}
		p_2(n)\sim B_0n^{-\frac76}e^{6n^{\frac13}\left( \frac{1}{4\sqrt2}\Gamma\left( \frac32\right)\zeta\left( \frac32\right)   \right)^{\frac23} }.
	\end{equation}
	On adding \eqref{p2diff} and \eqref{p2wright} and recalling that 
	\begin{equation*}\label{a2fromp2}
		a_2(n)=\begin{cases}
			p_2(0,2,n)-p_2(1,2,n) & \text{if~$ n $~is even,}\\
			p_2(1,2,n)-p_2(0,2,n) & \text{if~$n$~is odd,}
		\end{cases}
	\end{equation*}
	we have   
	%\begin{eqnarray*}
	%p_2(0,2,n)&\sim& \frac{B_0}{2}n^{-\frac76}e^{6n^{\frac13}\left( \frac{1}{4\sqrt2}\Gamma\left( \frac32\right)\zeta\left( \frac32\right)   \right)^{\frac23} }\quad\quad{\text{for $ n $ even,}}\\ 
	%p_2(1,2,n)&\sim& \frac{B_0}{2}n^{-\frac76}e^{6n^{\frac13}\left( \frac{1}{4\sqrt2}\Gamma\left( \frac32\right)\zeta\left( \frac32\right)   \right)^{\frac23} }\quad\quad{\text{for $ n $ odd,}}
	%\end{eqnarray*}
	%from where it follows that, as $ n\to\infty, $
	\[p_2(0,2,n)\sim p_2(1,2,n)\sim \frac{B_0}{2}n^{-\frac76}e^{6n^{\frac13}\left( \frac{1}{4\sqrt2}\Gamma\left( \frac32\right)\zeta\left( \frac32\right)   \right)^{\frac23} } \]
	as $ n\to\infty, $ and the proof is complete.
\end{proof}

\begin{Rem}
	As promised at the beginning and already revealed by our proof, by plugging in the values of $ B_0 $ and $\Lambda$ we obtain, as $ n\to\infty, $ the asymptotics  \[p_2(0,2,n)\sim p_2(1,2,n)\sim \frac{1}{2\pi\sqrt{3\pi}}\left(\frac{1}{4\sqrt2} \Gamma\left(\frac32 \right)\zeta\left( \frac32\right)  \right)^{\frac23} n^{-\frac76}e^{6n^{\frac13}\left(  \frac{1}{4\sqrt2}\Gamma\left( \frac32\right)\zeta\left( \frac32\right)\right)^{\frac23}}.\]
\end{Rem}
\begin{Rem}
	Note that, although we could not apply Meinardus' Theorem to our product in \eqref{GfromH}, the asymptotic value we obtained for $ a_2(n) $ in \eqref{p2diff} agrees, surprisingly or not, precisely with that given for $ r(n) $ in Theorem \ref{Meinardus}. This indicates that, even if it may not directly apply to certain generating products, Meinardus' Theorem is a powerful enough tool to provide correct heuristics. 
\end{Rem}
%\begin{Rem}
%One can readily check that if $ n_2 $ is chosen such that $ n_2\ge \frac{1}{4\sqrt2}\Gamma\left( \frac32\right)\zeta\left( \frac32   \right)(2\pi)^9\sim 6,246,320,  $ then the inequalities from part (ii) of Theorem \ref{Conj1} hold for all $ n\ge n_2. $\end{Rem}
\begin{Rem} We notice that, in its original formulation, part (ii) of Conjecture \ref{ConjBM} is not entirely true since there are cases when $ p_2(0,2,n)=p_2(1,2,n), $ as it happens, e.g., for $ n\in\{ 4,5,6,7,13,14,15,16,22,23,24,\allowbreak 31,39,47,48,56,64\}. $ No other values of $ n $ past 64 revealed such pattern and, based on the behavior we observed, we strongly believe that the inequalities hold true for $n\ge 65.$ In particular, we checked this is the case up to $ n=50,000. $ 
\end{Rem}

\section*{Acknowledgments} The author would like to thank Kathrin Bringmann and Karl Mahlburg for suggesting this project and for many helpful discussions, much of the present content originating in their initial efforts on this work. The author is also grateful to Stephan Ehlen for useful discussions, to Chris Jennings-Shaffer for his extensive comments on previous versions of this paper and support with some MAPLE computations, and to the anonymous referee for various suggestions on improving the presentation. The work was supported by the European Research Council under the European Union's Seventh Framework Programme (FP/2007--2013) / ERC Grant agreement n. 335220 --- AQSER.

\end{document}